
\documentclass{amsart}

%


\newcommand{\hl}[1]{#1}

\usepackage{tikz}
\usetikzlibrary{intersections,patterns}



\newtheorem{theorem}{Theorem}[section]
\newtheorem{lemma}[theorem]{Lemma}

\theoremstyle{definition}
\newtheorem{definition}[theorem]{Definition}

\theoremstyle{remark}

\numberwithin{equation}{section}

\usepackage{amsaddr}
\usepackage{subcaption}
\usepackage{amsmath}
\usepackage{amsfonts}
\usepackage{amssymb}
\usepackage{dsfont}
\usepackage{epstopdf}
\usepackage{graphicx}
\usepackage{enumerate}
\usepackage{xfrac}
\usepackage{amstext}    
\usepackage{array} 
\usepackage{todonotes}

\numberwithin{equation}{section}
\numberwithin{figure}{section}


\usetikzlibrary{patterns}

\newcommand{\ab}{\mathbf{a}}
\newcommand{\pp}{\mathbf{p}}
\newcommand{\kk}{\mathbf{k}}

\newcommand{\rem}[1]{}

%
%
%
%
%
\begin{document}

\title{Instability of Equilibria for the 2D Euler equations on the torus}



%

%

\author{Holger Dullin, Robert Marangell and Joachim Worthington}
\address{School of Mathematics and Statistics,
					Carslaw Building (F07),\\
			The University of Sydney, NSW 2006\\
	Email: Joachim.Worthington@sydney.edu.au (corresponding author),\\Holger.Dullin@sydney.edu.au, Robert.Marangell@sydney.edu.au.  }

\maketitle

\begin{abstract}

We consider the hydrodynamics of an incompressible fluid on a 2D periodic domain. There exists a family of stationary solutions with vorticity given by $\Omega^*=\alpha\cos (\mathbf{p} \cdot  \mathbf{x} )+\beta \sin (\mathbf{p} \cdot  \mathbf{x} )$. 
This situation can be approximated as a structure preserving finite dimensional Hamiltonian system by a truncation introduced by \cite{Zeitlin90,Zeitlin05} or by the more standard Galerkin style finite element method. 
We use these two truncations to analyse the linear stability of these solutions and analytical and numerical results are compared. Following the methods used by \cite{Li00} the problem is divided into subsystems and we prove that most subsystems are linearly stable. We derive a sufficient condition 
for a subsystem to be linearly unstable and derive an explicit lower bound for the associated real eigenvalues independent of the truncation size $N$. 
Then we show that the corresponding eigenvectors are in $\ell^2$.
This together with known stability results for the 2D periodic Euler equations allows us to conclude that most of these stationary solutions are nonlinearly unstable. We confirm our results with a numerical computation of the spectrum for a large, finite truncation. Finally we discuss the essential spectrum of the full problem as the limit of the truncated problem.

\end{abstract}

\section{Introduction}
\label{intro}

In terms of the vorticity $\Omega(\mathbf{x},t):(\mathds{T}^2 \times \mathds{R}^{+}) \to \mathds{R}$, the 2D incompressible Euler equations are  (see \cite{Arnold78} Appendix 2 for an overview)

\begin{equation}
\label{eq:originalsystem}
\frac{\partial \Omega}{\partial t} +u_1 \frac{\partial \Omega}{\partial x_1}+u_2\frac{\partial \Omega}{\partial x_2}=0,\;\;\;\;\; \frac{\partial u_1}{\partial x_1}+\frac{\partial u_2}{\partial x_2}=0.
\end{equation}

Here $\mathbf{x}=(x_1,x_2)^T$ and $u_1$, $u_2$ are the velocity components in the $x_1$ and $x_2$ directions respectively. We impose periodic boundary conditions $\Omega(\pi,x_2,t)=\Omega(-\pi,x_2,t)$ and $\Omega(x_1,\pi,t)=\Omega(x_1,-\pi,t)$.  There is a family of stationary solutions given by 
\begin{equation}
\label{eq:mainequilibria}
	\Omega^*=\alpha \cos (\mathbf{p} \cdot  \mathbf{x} )+\beta \sin (\mathbf{p} \cdot  \mathbf{x} )
\end{equation}
for $\alpha,\beta\in\mathds{R}$ and $\mathbf{p}\in\mathds{Z}^2$. 

The study of stability of certain solutions of the planar Euler equations was initiated 
by the seminal work by \cite{Arnold66} on the Lie-Poisson structure of the Euler equations where he
invented the Energy-Casimir method to prove stability. This is revisited in \cite{Arnold98}, 
in particular Section II.4. Arnol'd discusses a slightly more general problem where the torus has 
dimensions $X \times 2 \pi$ and $\mathbf{p} = (0,1)^T$, and shows that this solution is non-linearly 
stable when $X \le 2\pi$. 
In \cite{Meshalkin61} it was shown that any equilibrium with $\mathbf{p} = (0,1)^T$ and $X > 2\pi$ is linearly unstable for the viscous problem using linear stability analysis and infinite continued fractions. That paper also shows linear instability for $\mathbf{p} = (0, m)^T$, $m>1$ and any $X$. 
The linear instability of the inviscid problem for $\mathbf{p} = (0,m)^T$, $m>1$ was proved in \cite{Friedlander99} 
and discussed in \cite{Butta10}. Under the condition $m^2\neq m_1^2+m_2^2$ for any positive integers $m_1,m_2$, it was shown in \cite{Friedlander97} that the steady state for $\mathbf{p} = (0,m)^T$ is nonlinearly unstable.

In \cite{Li00} it is shown how to block-diagonalise the linearisation about the equilibrium with general $\mathbf{p}$ into so-called `classes',
and using this approach he again showed that $p=(0,1)^T$ is Lyapunov stable. 
This is used in \cite{Li04} where the essential and discrete spectrum of the linearisation of \eqref{eq:originalsystem} are studied at the steady state \eqref{eq:mainequilibria}. They studied the full infinite system, approaching the problem from a functional analytic perspective. They found an upper bound on the number of non-imaginary isolated eigenvalues, and described the essential spectrum. 
Furthermore, they showed that the spectrum of the linearised operator is the union of the spectrum coming from each of the classes from \cite{Li00} 
and that the spectral mapping theorem holds for the  Euler operator linearised about $\Omega^*$ (Theorem 2 in \cite{Li04}).

In the viscous problem solutions $e^{-\nu m^2 t}\cos (mx)$ are called \emph{bar states} in \cite{Beck12}.
They show that for non-zero viscosity $\nu$, and $m=1$ these bar states are `quasi-stationary', in that they decay on a slow timescale depending on the viscosity. 

We combine the block-diagonalisation used by \cite{Li00} with the  structure preserving finite-dimensional sine truncation \cite{Zeitlin90} and the Galerkin (see \cite{marchuk75}) finite element truncation to prove that for a large class of $\mathbf{p}$ the stationary solutions \eqref{eq:mainequilibria} are nonlinearly
 unstable.
Zeitlin's sine truncation leads to a finite dimensional Poisson structure and 
the Hamiltonian structure of the original PDE and its Casimirs are preserved in this finite-dimensional truncation. The Galerkin truncation does not preserve these Casimirs.
See \cite{Arnold98} and \cite{Kolev07} for a discussion of the use of Poisson brackets in hydrodynamics. The theoretical background of Zeitlin's sine truncation (and a related truncation for a spherical domain) is discussed in \cite{hoppe89}, and the concept of a ``limit'' of this algebra is discussed in depth in \cite{hoppe91}.

In Section \ref{sec:evolution}, the problem and the associated notation are introduced. The system is first decomposed into Fourier modes which are described by a non-canonical infinite dimensional Hamiltonian system. Then truncation is taken to reduce to a finite-mode approximation. We linearise around the steady state, which decouples the problem into subsystems.

In Section \ref{sec:Instability}, we reproduce the ``stable disc theorem'' from \cite{Li00} in the truncated setting. 
This theorem states that for classes whose mode numbers $\mathbf{a}$ satisfy $|\mathbf{a}|>|\mathbf{p}|$ the spectrum is stable. Thus most class subsystems do not contribute unstable modes to the spectrum of the full operator.
 Then we prove our ``unstable disc theorem"~\ref{thm:realevs}, which states that if exactly one mode number
 of a given class is inside the unstable disc then for sufficiently large $N$ there is a positive real eigenvalue.
In our fundamental Theorem \ref{thm:main}  we show that with certain additional assumptions this real eigenvalue is bounded away from zero when $N \to \infty$.
Furthermore in Lemma~\ref{lem:eigenvector} we show that the corresponding eigenvector of the infinite dimensional system is in $\ell^2$.

Section \ref{sec:Classes} provides the main Theorem~\ref{thm:fin} demonstrating non-linear instability of the stationary solution \eqref{eq:mainequilibria} for all choices of $\mathbf{p}$ but a few exceptions. 
In Lemma~\ref{lem:RealitySatisfied} we establish when the conditions for the lower bound needed in Section \ref{sec:Instability} are met. 
Zeitlin's truncation requires some care when proving results for both stable and unstable classes. Specifically it is not clear that the intersections between our classes and the disc $|\mathbf{a}|<|\mathbf{p}|$ behave in the way we expect.  In Lemma~\ref{lem:NSequence} we show that for most choices of $\mathbf{p}$, there is an appropriate truncation size $N$ to control the Zeitlin truncation so that Theorem \ref{thm:main} can be applied. The other cases of $\mathbf{p}$ can be treated using the Galerkin trunction.

Section \ref{sec:numerical} provides some numerical results. The numerical efficiency and accuracy of Zeitlin's truncation is compared favourably to the Galerkin truncation. A connection is made between the nature of the subsystems and the number and type of non-imaginary eigenvalues. A discussion of the number of non-imaginary eigenvalues is included, and the accuracy of our calculated lower bound is assessed. A brief section on the pure imaginary spectrum of our finite mode systems is included, replicating the results in \cite{Li04} via a very different method. 
We can show that the pure imaginary spectrum of our finite dimensional approximation approaches the essential spectrum of the full system. 
As a result we can naturally define a density of eigenvalues in the essential spectrum.


\section{Vorticity Evolution in Fourier Space, Truncation, Linearisation}
\label{sec:evolution}
\subsection{Hamiltonian Formulation}
\label{sec:setup}

The stream function $\Psi$ is defined through its relation to the fluid velocities by
\begin{equation}
	u_1=+\frac{\partial \Psi}{\partial x_2},\;\;\;\;u_2=-\frac{\partial \Psi}{\partial x_1}.
\end{equation}
The relationship between the stream function and the vorticity is
\begin{equation}
	\Omega=-\nabla^2 \Psi 
	\label{eq:voriticystream}
\end{equation}
and hence the PDE can be written as
\begin{equation} 
	\frac{\partial \Omega}{\partial t}=\frac{\partial \Omega}{\partial x_1}\frac{\partial \Psi}{\partial x_2}-\frac{\partial \Omega}{\partial x_2}\frac{\partial \Psi}{\partial x_1}.
	\label{eq:omegade} 
\end{equation}

For a fixed $\mathbf{p} \in \mathds{Z}^2$ and $\Gamma \in \mathds{R}$ we wish to analyse the steady state $\Omega^* = \alpha\cos (\mathbf{p} \cdot  \mathbf{x} )+\beta\sin (\mathbf{p} \cdot  \mathbf{x} )$. 

Note that we can write $\Omega^*=2\Gamma\cos(\mathbf{p}\cdot\mathbf{x}+\theta)$, where $\theta=\pm\tan^{-1}\left ( \frac{-\beta}{\alpha}\right )$ and $\Gamma=\pm\frac{\sqrt{\alpha^2+\beta^2}}{2}$. The signs of $\theta$ and $\Gamma$ will depend on the signs of $\alpha$ and $\beta$. If $\alpha=0$, then take $\theta=\frac{\pi}{2}$. Define $\mathbf{c}=\frac{\theta}{|\mathbf{p}|^2}\mathbf{p}$, so $\Omega^*=2\Gamma\cos(\mathbf{p}\cdot(\mathbf{x}+\mathbf{c}))$. Thus by taking the translation $\tilde{\mathbf{x}}=\mathbf{x}+\mathbf{c}$ we can instead just consider the steady state
$\Omega^*=2{\Gamma}\cos(\mathbf{p}\cdot\tilde{\mathbf{x}})$ by a change of origin. Therefore for the remainder of this paper we drop the tilde and simply consider the steady state
$$\Omega^*=2{\Gamma}\cos(\mathbf{p}\cdot{\mathbf{x}}).$$

Expand $\Omega$ into a Fourier series with coefficients $\omega_{\mathbf{k}}(t)$ as 
$$\Omega(\mathbf{x},t)=\sum_{\mathbf{k}\in\mathds{Z}^2} \omega_{\mathbf{k}}(t)e^{i\mathbf{k}\cdot{\mathbf{x}}}$$ and combine \eqref{eq:voriticystream} and \eqref{eq:omegade}. Then the Fourier coefficients are governed by the ODEs
\begin{equation}
	\dot{\omega}_{\mathbf{k}}(t)=\sum_{\mathbf{l}\in\mathds{Z}^2 \backslash \{\mathbf{0}\}}\frac{\mathbf{k} 				\times \mathbf{l}}{\;|\mathbf{l}|^2}\omega_{-\mathbf{l}}\omega_{\mathbf{k}+\mathbf{l}}
\label{eq:fourierodes}
\end{equation}
(where $\mathbf{x}	\times \mathbf{y}=x_1y_2-x_2y_1$ for $\mathbf{x}, \mathbf{y} \in\mathds{R}^2$, and $\dot{\omega}_{\mathbf{k}}:=\frac{\mathrm{d}}{\mathrm{d}t}({\omega}_{\mathbf{k}})$).
The condition $\omega_{\mathbf{k}}=\bar{\omega_{-\mathbf{k}}}$ is necessary for $\Omega$ to be real.

Define the `ideal fluid' Poisson Bracket in Fourier Space as
\begin{equation}
	\{f,g\}=\sum_{\mathbf{k},\mathbf{l}} \frac{\partial f}{\partial \omega_{\mathbf{k}}}\frac{\partial g}{\partial \omega_{\mathbf{l}}}(\mathbf{k}	\times \mathbf{l})\omega_{\mathbf{k}+\mathbf{l}}.
	\label{eq:Bracket}
\end{equation}
The corresponding infinite dimensional Poisson structure matrix  is 
 $J_{\mathbf{k},\mathbf{l}}=(\mathbf{k}	\times\mathbf{l})\omega_{\mathbf{k}+\mathbf{l}}$. Then \eqref{eq:fourierodes}  is a non-canonical Hamiltonian system with corresponding Hamiltonian
\begin{equation}
\mathcal{H}=\sum_{\mathbf{k}\in \mathds{Z}^2\backslash \{\mathbf{0}\}} \frac{\omega_{+\mathbf{k}}\omega_{-\mathbf{k}}}{|\mathbf{k}|^2}
	=\frac{1}{2}\sum_{\mathbf{k}\in \mathds{Z}^2\backslash \{\mathbf{0}\}} \frac{{\omega_{+\mathbf{k}}}^2}{|\mathbf{k}|^2}.
	\label{eq:untruncatedhamiltonian}
\end{equation}
The Hamiltonian is obtained from the Kinetic energy 
$$\mathcal{H}=\frac{1}{2}\int ||\mathbf{u}||^2 \mathrm{d}\mathbf{x}=-\frac{1}{2}\int \Omega \Psi \mathrm{d}\mathbf{x}\, .$$ 

\subsection{Galerkin Truncation}
\label{sec:galerkintruncation}

We now truncate to a finite mode approximation and study the spectrum of the equilibrium
corresponding to $\Omega^*$. We will present two approaches to this: a Galerkin-style finite element truncation, and a more sophisticated Poisson structure truncation by Zeitlin.

First consider the Galerkin-style truncation (see \cite{Ray11}). Define the domain for our truncated Fourier modes 
\begin{equation}\mathcal{D}=[-N,N]^2\cap \mathds{Z}^2.
	\label{eq:trundomain}
\end{equation}
 
Now set $\omega_\mathbf{k}=0, \;\dot{\omega}_\mathbf{k}=0$ for all $\mathbf{k}\not{\in}\mathcal{D}$. Then the differential equations \eqref{eq:fourierodes} define a finite set of ODEs, but not a Poisson system.

\subsection{Zeitlin's Truncation}
\label{sec:zeitlintruncation}

An alternative truncation is that described by \cite{Zeitlin90} (see also \cite{Pope90}, \cite{hoppe89}).
Restrict to the set of Fourier modes to
\begin{equation}
	\omega_\mathbf{k},\;\;\mathbf{k}\in\mathcal{D},
\end{equation}
and whenever a mode is referenced that is outside the domain $\mathcal{D}$ it is mapped back into $\mathcal{D}$.
For this we have the notation $\hat{\mathbf{k}}$, which for any $\mathbf{k}$ denotes a mode $\hat{\mathbf{k}} \in \mathcal{D}$
for which the difference $\mathbf{k} - \hat{\mathbf{k}}=(2N+1)(a,b)^t$ for some integers $a,b$.

Zeitlin gave the following Poisson bracket on the domain $\mathcal{D}$:
\begin{align}
\label{eq:zeitlinstructure}
\{f,g\}&=\sum_{\mathbf{k},\mathbf{l}\in\mathcal{D}} \frac{\sin (\varepsilon \mathbf{k} 	\times \mathbf{l} )}{\varepsilon}\frac{\partial f}{\partial \omega_{\mathbf{k}}}\frac{\partial g}{\partial \omega_{\mathbf{l}}} \omega_{\widehat{\mathbf{k}+\mathbf{l}}}, \\
J_{\mathbf{k},\mathbf{l}}&=\frac{1}{\varepsilon}\sin (\varepsilon \mathbf{k} 	\times \mathbf{l} ) \omega_{\widehat{\mathbf{k}+\mathbf{l}}}
\end{align} 
where $\mathbf{k},\mathbf{l}\in\mathcal{D}$, and $\varepsilon=\frac{2\pi}{2N+1}$. The corresponding truncation of \eqref{eq:untruncatedhamiltonian} is the Hamiltonian 
\begin{equation}
\mathcal{H}=\frac{1}{2}\sum_{\mathbf{k}\in \mathcal{D}\setminus \{\mathbf{0}\}} \frac{\omega_{+\mathbf{k}}\omega_{-\mathbf{k}}}{|\mathbf{k}|^2}\,,
\end{equation}
where only the domain of summation has changed. 

The vector field under the Zeitlin truncation is thus given by
\begin{align}
\dot \omega_{\mathbf{k}} = 
	\left(J \nabla \mathcal{H}\right)_{\mathbf{k}}&=\sum_{\mathbf{l}\in D} J_{\mathbf{k},\mathbf{l}}\nabla H_\mathbf{l} \\
		&=\frac{1}{\varepsilon}\sum_{\mathbf{l}\in D}\sin (\varepsilon\mathbf{k}\times \mathbf{l})
		\omega_{\widehat{\mathbf{k}+\mathbf{l}}}\frac{\omega_{-\mathbf{l}}}{|\mathbf{l}|^2}.
		\label{eq:zeitlinode}
\end{align}


The primary theoretical advantage of the Zeitlin truncation over the Galerkin truncation is that $2N+1$ of the Casimirs present in the full system are preserved in the Zeitlin truncated system. The disadvantage of the Zeitlin truncation is that we must take some care when making arguments based on it (see Section \ref{sec:Classes}). The two truncations will be compared both numerically and analytically in later sections. For details of the construction and a description of the Casimirs see \cite{Zeitlin90,Zeitlin05}.


\subsection{The Linearised system}

For a fixed $\mathbf{p}\in \mathcal{D}\setminus \{\mathbf{0}\}$ the equilibrium point of the PDE $\Omega^* = 2 \Gamma \cos (\mathbf{p} \cdot  \mathbf{x} )$ is an equilibrium 
point of the truncated ODE given by
\begin{equation}
\omega_{\mathbf{l}}^*=
\left\{
	\begin{array}{ll}
		\Gamma  & \mbox{if } \mathbf{l}=\pm\mathbf{p} \\
		0 & \mbox{otherwise.}
	\end{array}
\right .
\label{eq:fourierequil}
\end{equation}

As the original problem has symmetries $x \leftrightarrow -x$, $y \leftrightarrow -y$, and $x \leftrightarrow y$, let $\mathbf{p}=(p_1,p_2)^T$ with $p_1\geq p_2 \geq 0$ and $p_1>0$. 

The Jacobian of the Galerkin truncated vector field \eqref{eq:fourierodes} is
\begin{equation}
	J_{\mathbf{a},\mathbf{b}}=
	\begin{cases}
		0\;\;\;\text{if }\mathbf{a}=\mathbf{b}\text{ or }\mathbf{b}=\mathbf{0}\\
		\left ( \frac{1}{|\mathbf{b}|^2}  -\frac{1}{|{\mathbf{b}-\mathbf{a}}|^2}  \right )\mathbf{b} \times \mathbf{a} \;\omega_{{\mathbf{a}-\mathbf{b}}}.
	\end{cases}
\end{equation}

The Jacobian of the Zeitlin truncated vector field \eqref{eq:zeitlinode} is 
\begin{equation}
	J'_{\mathbf{a},\mathbf{b}}=
	\begin{cases}
		0\;\;\;\text{if }\mathbf{a}=\mathbf{b}\text{ or }\mathbf{b}=\mathbf{0}\\
		\frac{1}{\varepsilon}\left ( \frac{1}{|\mathbf{b}|^2} \sin (\varepsilon \mathbf{b} \times \mathbf{a})
			+\frac{1}{|\widehat{(\mathbf{b}-\mathbf{a})}|^2}\sin (\varepsilon \mathbf{a} \times \widehat{(\mathbf{b}-\mathbf{a})}) \right ) \omega_{\widehat{(\mathbf{a}-\mathbf{b})}}.
	\end{cases}
\end{equation}

Evaluating these at the equilibrium \eqref{eq:fourierequil} gives the linearised systems
\begin{equation}
\label{eq:fullDEG}
\dot{\omega}_{\mathbf{k}}=
	\Gamma \left ( \frac{1}{|\mathbf{p}|^2}-\frac{1}{|\mathbf{k}+\mathbf{p}|^2}\right )\mathbf{k}\times\mathbf{p} \; \omega_{\mathbf{k}+\mathbf{p}}-
	\Gamma \left ( \frac{1}{|\mathbf{p}|^2}-\frac{1}{|\mathbf{k}-\mathbf{p}|^2}\right )\mathbf{k}\times\mathbf{p} \; \omega_{\mathbf{k}-\mathbf{p}}
\end{equation}
for the Galerkin truncation and 
\begin{equation}
\label{eq:fullDEZ}
\begin{aligned}
\dot{\omega}_{\mathbf{k}}=
\frac{\Gamma}{\varepsilon}\left ( \frac{1}{|\widehat{\mathbf{k}-\mathbf{p}}|^2}\sin (\varepsilon (\widehat{\mathbf{k}-\mathbf{p}})	\times \mathbf{k})+\frac{1}{|\mathbf{p}|^2}\sin(\varepsilon \mathbf{p} 	\times \mathbf{k}) \right ) \omega_{\widehat{\mathbf{k}-\mathbf{p}}} \\
+\frac{\Gamma}{2	\varepsilon}\left ( \frac{1}{|\widehat{\mathbf{k}+\mathbf{p}}|^2}\sin (\varepsilon (\widehat{\mathbf{k}+\mathbf{p}})	\times \mathbf{k})+\frac{1}{|\mathbf{p}|^2}\sin(\varepsilon \mathbf{k} 	\times \mathbf{p}) \right ) \omega_{\widehat{\mathbf{k}+\mathbf{p}}}\,
\end{aligned}
\end{equation}
for the Zeitlin truncation. 

\subsection{Decoupling into Classes}
\label{sec:Decoupling}

The key observation is that $\dot{\omega}_{\mathbf{k}}$ depends only ${\omega}_{\mathbf{k}\pm \mathbf{p}}$. Thus the linearised systems can be block-diagonalised. This block-diagonalisation
is analogous to the construction in \cite{Li00}. Following Li we call the individual blocks classes, 
which leads to the following definition:
\begin{definition}[Classes]
For some $\mathbf{a}\in\mathcal{D}$ (and $\mathbf{p}$ fixed by the choice of equilibrium), the class $\Sigma_\mathbf{a} \subset \mathcal{D}$ is defined for the Galerkin truncation by 
\begin{equation}
	\Sigma_\mathbf{a} = \{ \mathbf{a}+k\mathbf{p}\in \mathcal{D} \; | \; k \in \mathds{Z} \}.
\end{equation}
 or equivalently for the Zeitlin truncation
\begin{equation}
	\Sigma'_\mathbf{a} = \{ \widehat{\mathbf{a}+k\mathbf{p}}\in \mathcal{D} \; | \; k \in \mathds{Z} \}.
\end{equation}
\end{definition}
\begin{figure}
\centering
%
%
%
%
%
%
%
%
%
%
%
%
%
%
%
%
%
%
%
%
%
%
%
%
%
%
%

\includegraphics{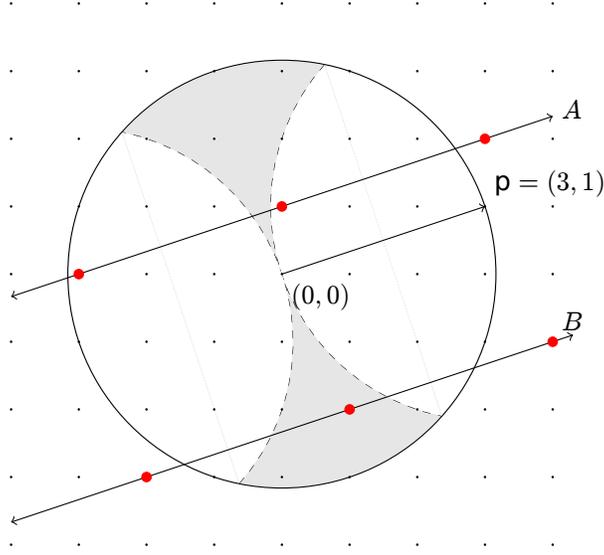}

\caption{The differential equations governing the set of Fourier Coefficients 
decouple into `classes' when linearised. For $\mathbf{a}\in \mathds{Z}^2$, $\dot{\omega}_{\mathbf{a}}$ depends only on $\omega_{\mathbf{a}+\mathbf{p}}$ and $\omega_{\mathbf{a}-\mathbf{p}}$. Extending this idea we get a subset of coefficients that only depend on each other, \emph{the class led by }$\mathbf{a}$. These coefficients all lie on a straight line with direction $\mathbf{p}$. 
Classes that do not have an intersection with the disc indicated are stable. Clases that intersect the shaded region indicated at a lattice point have a pair of real eigenvalues (Theorem \ref{thm:realevs}). Classes that intersect the disc but do not have intersect the shaded region at a lattice point lead to either a complex quadruplet or two pairs of real eigenvalues.}
\label{fig:classes}
\end{figure}
Figure \ref{fig:classes} illustrates this idea. Note that the Zeitlin truncated classes $\Sigma'_\mathbf{a}$ `wrap around' the domain $\mathcal{D}$. 

\cite{Li00} makes the analogous definition for the non-truncated system. In that paper, the classes are infinitely large, and there are infinitely many classes. 
In our definition, there are finitely many classes of finite size, which depend on the truncation size $N$. As $\mathcal{D}$ is finite, $\Sigma_\mathbf{a}$ and $\Sigma'_\mathbf{a}$ are both finite. 
Write $\mathbf{p}=( p_1 , p_2 )^T$, and $\kappa=\text{gcd}(p_1,p_2)$. Then

\begin{equation}
\label{eq:ClassSizeG}
\left|\Sigma_\mathbf{a}\right| \leq \left \lfloor \frac{2N+1}{\text{max}(p_1,p_2)} \right \rfloor
\end{equation} 

and 
\begin{equation}
\label{eq:ClassSizeZ}
\left|\Sigma'_\mathbf{a}\right|=\frac{2N+1}{\text{gcd}(2N+1,\kappa)}.
\end{equation}
Note $\left|\Sigma'_\mathbf{a}\right|$ does not depend on $\mathbf{a}$, and is odd for all choices of $N$ and $\mathbf{p}$. We use this fact many times later. For $\text{gcd}(p_1,p_2)=1$, $|\Sigma'_\mathbf{a}|=2N+1$.

For $\Sigma_\mathbf{a}$ we can make a ``canonical'' choice of $\mathbf{a}$ by selecting $\mathbf{a}$ in a length $|\mathbf{p}|$ strip. For $\Sigma'_\mathbf{a}$ a canonical choice for $\mathbf{a}$ is found by restricting $\mathbf{a}$ to be in a $|\mathbf{p}|$ by $\frac{|\Sigma'_{\mathbf{a}}|}{|\mathbf{p}|}$ rectangle centred around $\mathbf{0}$ oriented so the sides of length $|\mathbf{p}|$ are parallel to $\mathbf{p}$. 
For the Zeitlin truncation a unique choice is not possible if $\kappa>1$. See Section \ref{sec:Classes} and \ref{lem:NSequence} for details.

Fixing $\mathbf{p}$ and a canonical choice of $\mathbf{a}$ we now restrict our attention to the associated subsystems $\Sigma_\mathbf{a}$ and $\Sigma'_\mathbf{a}$. Introduce new notation
\begin{equation}
\omega_k=\begin{cases} \omega_{\mathbf{a}+k\mathbf{p}}& \text{ if } \mathbf{a}+k\mathbf{p}{\in}\mathcal{D}, \\ 0&\text{ otherwise.}\end{cases}
\;\;\;\;\;\;\;\;\;\;\;\;
\omega'_k=\omega_{\widehat{\mathbf{a}+k\mathbf{p}}},
\end{equation}
\begin{equation}
	\label{eq:rhodef}
	\rho_k=\frac{1}{|\mathbf{p}|^2}-\frac{1}{|\widehat{\mathbf{a}+k\mathbf{p}}|^2}\;\; \text{for} \;\;k=\in\mathbb{Z},
\end{equation}
\begin{equation}
	\alpha=\Gamma \mathbf{a}\times \mathbf{p} \in \mathds{R}, \;\;\;
	\alpha'=\frac{\Gamma\sin (\varepsilon (\widehat{\mathbf{a}+k\mathbf{p}}) 	
	    \times \mathbf{p})}{\varepsilon} \in \mathds{R}.
\end{equation}
The value of the coefficient $\rho_k$ is related to the distance of the lattice point 
$\widehat{ \mathbf{a} + k \mathbf{p}}$ from the boundary of the disc of radius $|\mathbf{p}|$, 
where negative values of $\rho_k$ correspond to lattice points inside the disc.
Note that $\alpha$ does not depend on $k$. 
Also note that
\begin{equation}
\label{eq:AlphaLimit}
\lim_{N\to \infty} \alpha'=\lim_{\varepsilon \to 0}\alpha' = \Gamma \mathbf{a}	\times \mathbf{p}=\alpha.
\end{equation}

Noting that $\sin( \epsilon ( \widehat{ \kk + \pp}) \times \kk) = \sin ( \epsilon \pp \times \kk)$
and rewriting \eqref{eq:fullDEG} and \eqref{eq:fullDEZ} with the new notation gives a compact form of the linear systems
\begin{align}
\label{eq:classDEG}
&\dot{\omega}_k=\alpha ( \rho_{k+1}\omega_{k+1}-\rho_{k-1}\omega_{k-1}), \\
\label{eq:classDEZ}
&\dot{\omega}'_k=\alpha' ( \rho_{k+1}\omega'_{k+1}-\rho_{k-1}\omega'_{k-1}).
\end{align}
All indices in \eqref{eq:classDEZ} are written modulo $|\Sigma'_\mathbf{a}|$.

Let $m_1,m_2\in\mathbb{Z}$ be such that $\mathbf{a}-m_1\mathbf{p}\in\mathcal{D}$, $\mathbf{a}-(m_1+1)\mathbf{p}\not{\in}\mathcal{D}$ and $\mathbf{a}+m_2\mathbf{p}\in\mathcal{D}$, $\mathbf{a}+(m_2+1)\mathbf{p}\not{\in}\mathcal{D}$. If we write $\boldsymbol{\omega}=\begin{pmatrix}\omega_{-m_1},\;\omega_{-m_1+1},\;...,\omega_{-1},\omega_0,\omega_{+1},\;...,\omega_{m_2}\end{pmatrix}^T$, then $\boldsymbol{\dot{\omega}}=\alpha A\boldsymbol{\omega}$ where 
\begin{equation}
\label{eq:MainMatrixG}
A= \begin{pmatrix}
		0 & +\rho_{-m_1+1} & 0  & \cdots & 0 & 0 & 0 \\
		-\rho_{-m_1} & 0 & +\rho_{-m_1+2} &  \cdots  & 0 & 0 &  0 \\
		0 & -\rho_{-m_1+1} & 0 &  \cdots & 0 & 0 & 0 \\
		\vdots & \vdots & \vdots &  \ddots & \vdots & \vdots & \vdots \\
		0 & 0 & 0 &  \cdots & 0 & +\rho_{m_2-1} & 0  \\
		0 & 0 & 0 &  \cdots & -\rho_{m_2-2} & 0 & +\rho_{m_2}  \\
		0 & 0 & 0 &  \cdots & 0 & -\rho_{m_2-1} & 0
	\end{pmatrix}.
\end{equation} 

If we write $\boldsymbol{\omega}'=\begin{pmatrix}\omega'_0,\;\omega'_1,\;...,\;\omega'_{n-1}\end{pmatrix}^T$, then $\boldsymbol{\dot{\omega}}'=\alpha' A'\boldsymbol{\omega}'$ where 
\begin{equation}
\label{eq:MainMatrixZ}
A'= \begin{pmatrix}
		0 & +\rho_1 & 0 & 0 & \cdots & 0 & -\rho_{n-1} \\
		-\rho_0 & 0 & +\rho_2 & 0 & \cdots & 0 &  0 \\
		0 & -\rho_1 & 0 & +\rho_3 & \cdots & 0 & 0 \\
		0 & 0 & -\rho_2 & 0 &  \cdots & 0 & 0 \\
		\vdots & \vdots & \vdots & \vdots & \ddots & \vdots & \vdots \\
		0 & 0 & 0 & 0 & \cdots & 0 & +\rho_{n-1}  \\
		+\rho_0 & 0 & 0 & 0 & \cdots & -\rho_{n-2} & 0
	\end{pmatrix}.
\end{equation} 

If $\mathbf{a}=\mathbf{0}$ or $\mathbf{a}$ is parallel to $\mathbf{p}$, $\alpha=\alpha'=0$. Thus the associated class only contributes zero eigenvalues and will not contribute to the linear instability of the system. We can thus ignore the classes with $\alpha=\alpha'=0$.

Note that $A$ can be written as $A=JS$ where
\begin{equation}
\label{eq:AntisymmetricG}
J= \begin{pmatrix}
		0 & +1 & 0 & \cdots & 0 & 0 \\
		-1 & 0 & +1 &  \cdots & 0 &  0 \\
		0 & -1 & 0 &  \cdots & 0 & 0 \\
		\vdots & \vdots &  \vdots & \ddots & \vdots & \vdots \\
		0 & 0 & 0 &  \cdots & 0 & +1  \\
		0 & 0 & 0  & \cdots & -1 & 0
	\end{pmatrix},
	\;\;
S= \begin{pmatrix}
		\rho_{-m_1} & 0 & 0 & \cdots &  0 \\
		0 & \rho_{-m_1+1} & 0 &  \cdots  &  0 \\
		0 & 0 & \rho_{-m_1+2} &  \cdots  & 0 \\
		\vdots & \vdots &  \vdots & \ddots &  \vdots \\
		0 & 0 & 0  & \cdots & \rho_{m_2}
	\end{pmatrix}.
\end{equation} 
%

A similar construction can be made for $A'=J'S'$. As $J$ and $J'$ are skew-symmetric and $S$ and $S'$ are symmetric, these are (non-canonical) Hamiltonian systems. For both systems, $\mathcal{H}=\sum_k \rho_k\omega_k^2 $.\footnote{As $J'$ is circulant, one could write down its eigensystem explicitly by applying a discrete Fourier transform (see \cite{Karner02}) and therefore find a set of canonical coordinates (see \cite{Silva01}). This will not be used in this paper, but may be of interest.} From this it follows that if $\lambda$ is an eigenvalue of $A$ or $A'$ then  $-\lambda$, $\bar{\lambda}$ and $-\bar{\lambda}$ are also eigenvalues. Note that $\text{det}(J')=0$ as $J'$ has odd size, and therefore $J'$ is not symplectic with a one-dimensional kernel. $J$ is symplectic if and only if $|\Sigma_{\mathbf{a}}|$ is even.


We now focus on the behaviour of the eigenvalues of $A$ and $A'$ as a function of the $\rho_k$ values. Note that there is a symmetry in $\mathbf{a}$. For every class $\Sigma_\mathbf{a}$ or $\Sigma'_\mathbf{a}$, the class $\Sigma_{-\mathbf{a}}$ or $\Sigma'_{-\mathbf{a}}$ generates the same set of eigenvalues. It is worth noting that $\alpha(-\mathbf{a})=-\alpha(\mathbf{a})$ and $\alpha'(-\mathbf{a})=-\alpha'(\mathbf{a})$, but as all eigenvalues occur in $\pm$ pairs, this does not affect the spectrum. Thus for the full system rather than a particular class all eigenvalues occur with even multiplicity.

\begin{definition}[The Unstable Disc] Introduce the disc $D_\mathbf{p}$ 
\begin{equation}
	D_\mathbf{p}=\{ \mathbf{x}\in \mathcal{D} \; | \; |\mathbf{x}|<|\mathbf{p}|\}.
\end{equation}
\end{definition}
This disc is shown in figure \ref{fig:classes}.  A simple but important observation is
\begin{lemma}
A lattice point is inside the unstable disc if and only if the corresponding $\rho$ is negative:
\begin{equation}
\label{eq:insidenegative}
	\widehat{\mathbf{a}+k\mathbf{p}} \in D_\mathbf{p} \; \; \iff \; \; \rho_k<0.
\end{equation}\end{lemma}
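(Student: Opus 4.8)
The plan is to unwind the two definitions involved and reduce \eqref{eq:insidenegative} to the order-reversing property of $t \mapsto 1/t$ on $(0,\infty)$; no machinery beyond elementary arithmetic is needed.

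First I would note that, by construction of the hat map, $\widehat{\mathbf{a}+k\mathbf{p}}$ lies in $\mathcal{D}$ for every $k$. Hence the condition $\widehat{\mathbf{a}+k\mathbf{p}} \in D_\mathbf{p}$ carries no hidden lattice constraint and is equivalent to the single norm inequality $|\widehat{\mathbf{a}+k\mathbf{p}}| < |\mathbf{p}|$. Since $\mathbf{p} \in \mathcal{D}\setminus\{\mathbf{0}\}$ we have $|\mathbf{p}| > 0$, and since $\rho_k$ is defined through \eqref{eq:rhodef} only when $\widehat{\mathbf{a}+k\mathbf{p}} \neq \mathbf{0}$, we may also take $|\widehat{\mathbf{a}+k\mathbf{p}}| > 0$. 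Squaring then preserves the inequality, giving $|\widehat{\mathbf{a}+k\mathbf{p}}|^2 < |\mathbf{p}|^2$, and applying $t \mapsto 1/t$, which is strictly decreasing on the positive reals, reverses it to $1/|\mathbf{p}|^2 < 1/|\widehat{\mathbf{a}+k\mathbf{p}}|^2$, that is, $\rho_k < 0$ by \eqref{eq:rhodef}. Reading this chain of equivalences in either direction proves \eqref{eq:insidenegative}.

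There is essentially no obstacle here; the lemma is a bookkeeping identity, and indeed it is precisely the interpretation of $\rho_k$ announced in the sentence following \eqref{eq:rhodef}. The only point worth a remark is the degenerate possibility $\widehat{\mathbf{a}+k\mathbf{p}} = \mathbf{0}$, for which $\rho_k$ is not literally defined by \eqref{eq:rhodef}; since $\rho_k$ enters the subsystems \eqref{eq:classDEG}--\eqref{eq:classDEZ} only through the modes actually present, the lemma is understood for those $k$ and the division above is then legitimate. Formally, reading $1/|\mathbf{0}|^2$ as $+\infty$ gives $\rho_k = -\infty < 0$, consistent with $\mathbf{0} \in D_\mathbf{p}$, so the equivalence even persists in that case.
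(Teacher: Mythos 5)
Your argument is correct and is essentially identical to the paper's own proof, which likewise just unwinds the definition \eqref{eq:rhodef} to see that $\rho_k<0$ if and only if $|\widehat{\mathbf{a}+k\mathbf{p}}|<|\mathbf{p}|$, i.e.\ $\widehat{\mathbf{a}+k\mathbf{p}}\in D_\mathbf{p}$. Your extra care with the degenerate point $\widehat{\mathbf{a}+k\mathbf{p}}=\mathbf{0}$ is a reasonable addition but does not change the substance.
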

\begin{proof} This is true as $\rho_k<0$ if and only if $|\mathbf{a}+k\mathbf{p}|<|\mathbf{p}|$ from \eqref{eq:rhodef}, which is exactly the condition that $\widehat{\mathbf{a}+k\mathbf{p}}\in D_\mathbf{p}$. 
\end{proof}

This is illustrated in figure \ref{fig:classes}. The point $\mathbf{a}$ inside the disc corresponds to $\rho_0<0$ and the other points correspond to $\rho_k>0$. Also note that 
$$\widehat{\mathbf{a}+k\mathbf{p}} \in \partial D_\mathbf{p} \iff \rho_k=0.$$

\section{Stability and Instability of Classes}
\label{sec:Instability}

\subsection{Stable Classes}

The matrices $A$ and $A'$ are similar to skew-symmetric matrices by conjugation.

\begin{equation}
T=\begin{pmatrix}
	\sqrt{\rho_{-m_1}} & 0 & 0 & \cdots & 0 \\
	0 & \sqrt{\rho_{-m_1+1}} & 0 & \cdots & 0 \\
	0 & 0 & \sqrt{\rho_{-m_1+2}} & \cdots & 0 \\
	\vdots & \vdots & \vdots & \ddots & \vdots  \\
	0 & 0 & 0 & \cdots & \sqrt{\rho_{m_2} }
\end{pmatrix},
\end{equation}
\begin{equation}
\label{eq:RealSkewSymmetricG}
 TAT^{-1}= \begin{pmatrix}
		0 & +\sqrt{\rho_{-m_1}\rho_{-m_1+1}} & 0 & \cdots & 0 \\
		-\sqrt{\rho_{-m_1}\rho_{-m_1+1}} & 0 & +\sqrt{\rho_{-m_1+1}\rho_{-m_1+2}} & \cdots &  0 \\
		0 & -\sqrt{\rho_{-m_1+1}\rho_{-m_1+2}} & 0  & \cdots & 0 \\
		\vdots & \vdots & \vdots & \ddots &  \vdots \\
		0  & 0 & 0 & \cdots  & 0
	\end{pmatrix}.
\end{equation}
A very similar construction exists for $A'$.

If $\rho_k> 0$ for all $k$, this transformation is real and thus $A$ and $A'$ are similar to real skew-symmetric matrices. Thus all eigenvalues are purely imaginary and $A$ and $A'$ can be diagonalised and so the class is linearly stable. By \eqref{eq:insidenegative} this condition is true exactly if 
\begin{equation} \label{eqn:stablecond}
	\Sigma_{\mathbf{a}}\cap D_\mathbf{p}=\emptyset.
\end{equation}

This is the finite-dimensional analogue of Li's \emph{Unstable Disc Theorem} (Theorem III.1) in \cite{Li00}, though the method of proof used in that paper is naturally very different. A discussion of the details such as choice of $\mathbf{p}$ and $N$ required for \eqref{eqn:stablecond} to hold follows in Section \ref{sec:Classes}.
Because of this result, only classes with $\mathbf{a}\in D_\mathbf{p}$ can contribute linear instability. Also $\mathbf{a}=\mathbf{0}$ implies $\alpha=\alpha'=0$ and so this class cannot contribute linear instability. 



\subsection{Unstable Classes}
\label{sec:Unstable}

For classes with $\mathbf{a}\in D_\mathbf{p}$, there are two primary possibilities to consider:
\begin{enumerate}[i)]
\item There is exactly one intersection between the class and the disc (ie, $\Sigma_{\mathbf{a}}\cap D_\mathbf{p}=\{\mathbf{a}\}$). This can only occur when $\mathbf{a}$ is chosen to be in the shaded area indicated in figure \ref{fig:classes}.
\item There are exactly two consecutive intersections between the class and the disc (ie, \\ ${\Sigma_{\mathbf{a}}\cap D_\mathbf{p}=\{\mathbf{a}, \mathbf{a}+\mathbf{p}\}}$ or $\Sigma_{\mathbf{a}}\cap D_\mathbf{p}=\{\mathbf{a}, \mathbf{a}-\mathbf{p}\}$). This occur when $\ab\in D_\pp$ is chosen outside the shaded area indicated in figure \ref{fig:classes}.
\end{enumerate}

For the Zeitlin style truncation, there is also a third possibility we must consider:

\begin{enumerate}[iii)]
\item There are at least two non-consecutive intersections between the class and the disc (ie, $\mathbf{a}\in \Sigma'_{\mathbf{a}}\cap D_\mathbf{p}$ and $\widehat{\mathbf{a}+k\mathbf{p}}\in\Sigma_{\mathbf{a}}\cap D_\mathbf{p}$ for some $k\neq -1,0,1$).
\end{enumerate}
Note that points on the boundary are treated as being outside the disc. 
Also note that it is not possible for three consecutive lattice points in a class to be in the unstable disc. 
If $\mathbf{a}$, $\mathbf{a}-\mathbf{p}$ and $\mathbf{a}+\mathbf{p}$ were are all in $\overline{ D_\mathbf{p}}$, they would lie along a diameter as  $\overline{ D_\mathbf{p}}$ has diameter $2|\mathbf{p}|$ and the distance from $\mathbf{a}-\mathbf{p}$ to $\mathbf{a}+\mathbf{p}$ is $2|\mathbf{p}|$. 
Therefore $\ab = (0,0)$ and $\mathbf{a}\pm \mathbf{p} \in \partial  \overline{ D_\mathbf{p}}$.
This is the only possibility to have three consecutive lattice point in  $\overline{ D_\mathbf{p}}$, 
and hence $D_\pp$ can at most contain two consecutive lattice points.
Figure \ref{fig:classes} makes this idea clear.

From our numerical and analytical results we can categorise the spectrum of the class in these three cases:
\begin{enumerate}[i)]
\item The spectrum has a single pair of real eigenvalues and all other eigenvalues on the imaginary axis. This is proved in Theorem~\ref{thm:realevs}.
\item The spectrum typically corresponds to a quartet of complex eigenvalues $\pm\alpha\pm\beta i$, and all other eigenvalues on the imaginary axis. It  can also correspond to two pairs of real eigenvalues, though seems to be less common.
\item This corresponds to the class `wrapping around' the truncated domain of lattice points and intersecting the disc again; see \ref{fig:Problems}. The spectrum is a combination of case (i) and case (ii) according to how 
successive intersections with the disc occur.
\end{enumerate}
This last case is atypical and does not occur with the Galerkin truncation. Usually this case can be avoided by 
a proper choice of $N$, however when both entries of  $\mathbf{p}$ are even, it cannot be 
avoided.  This is discussed in detail in Section \ref{sec:Classes}, particularly Lemma \ref{lem:NSequence}.


All our numerical evidence is consistent with the result in \cite{Li04} that the number of eigenvalues with non-zero real part is  $\leq 2|D_\mathbf{p}|$ (twice the number of interior lattice points in the unstable disc), and our observation is that this is the {\em exact} number of hyperbolic eigenvalues.

For case (i), Theorem~\ref{thm:realevs} proves that this case always leads to non-zero real eigenvalues. 
For some $\pp$ and $\ab$ leading to case (i), Theorem~\ref{thm:main} describes an explicit lower bound which is independent of the truncation size $N$ for the real eigenvalues. \hl{This agrees with numerically observed results, eg the} $\epsilon=0$ \hl{inviscid result shown in Figure 2 of} \cite{li08}.
It should be noted that our methods do not preclude the possibility that there are other eigenvalues with non-zero real part unaccounted for; we simply assert that there is at least one eigenvalue with positive real part. This together with the results from \cite{Friedlander97}, \cite{Li04}, and \cite{shvidkoy03} is sufficient to conclude nonlinear instability for the whole system.

For the following section, we consider a general set of parameters $(a_0,a_1,...,a_{n-1})$ instead of $\rho_k$. Introduce a tridiagonal matrix 
\begin{equation}
\label{eq:tmatrix}
T_\alpha^\beta=
\begin{pmatrix}
0 & a_{\alpha+1} & 0 & 0 & \cdots & 0  & 0 \\
-a_{\alpha} & 0 & a_{\alpha+2} & 0 & \cdots & 0 & 0 \\
0 & -a_{\alpha+1} & 0 & a_{\alpha+3} & \cdots & 0 & 0 \\ 
0 & 0 & -a_{\alpha+2} & 0 & \cdots & 0 & 0 \\
0 & 0 & 0 & -a_{\alpha+3} & \cdots & 0 & 0 \\
\vdots & \vdots & \vdots & \vdots & \ddots & \vdots & \vdots \\
0 & 0 & 0 & 0 & \cdots & 0 & a_{\beta} \\
0 & 0 & 0 & 0 & \cdots & -a_{\beta-1} & 0
\end{pmatrix},
\end{equation}
and its characteristic polynomial
\begin{equation}
\mathcal{T}_\alpha^\beta(x)=\text{det}(xI-T_\alpha^\beta)
\end{equation}
for some integers $0\leq \alpha < \beta \leq n-1$.
Then $\mathcal{T}_\alpha^\beta(x)$ can be recursively defined by expansion from top left to bottom right
\begin{align}
& \mathcal{T}_\alpha^\alpha(x)=1,\;\;\;\;\;\;\mathcal{T}_\alpha^{\alpha+1}(x)=x^2+ a_{\alpha+1}a_\alpha, \nonumber \\
&\mathcal{T}_\alpha^\beta(x)=x\mathcal{T}_\alpha^{\beta-1}(x)+a_\beta a_{\beta-1}\mathcal{T}_\alpha^{\beta-2}(x). \label{eq:recurrence1}
\end{align}
or by expansion from bottom right to top left
\begin{align}
& \mathcal{T}_\beta^\beta(x)=1,\;\;\;\;\;\;\mathcal{T}_{\beta-1}^{\beta}(x)=x^2+a_{\beta-1}a_\beta , \nonumber \\
&\mathcal{T}_\alpha^\beta(x)=x\mathcal{T}_{\alpha+1}^{\beta}(x)+a_{\alpha} a_{\alpha+1}\mathcal{T}_{\alpha+2}^{\beta}(x).
\label{eq:TRecurrence}
\end{align}
Note that
\begin{equation}
\label{eq:TPositive}
	a_k>0 \text{ for all } \alpha\leq k \leq \beta \; \implies \; \mathcal{T}_\alpha^\beta(x)>0 \text{ for all }x>0.
\end{equation}
This can be seen by the recursive definitions; all terms are positive. \footnote{Note that the \eqref{eq:recurrence1} and \eqref{eq:TRecurrence} satisfy the condition for Favard's theorem (see \cite{Favard35}) . Thus the polynomials $\mathcal{T}(x;a_1,...a_j)$ are orthogonal for $j=1,2,3,...$ with respect to an inner product with some weight function (see \cite{Szego39}). However, as the $\alpha_k$ terms may be negative this weight function will not always be positive. This will not be used here, but may be useful in future work for describing the imaginary part of the spectrum.}

The following is also useful:
\begin{equation}\mathcal{T}_\alpha^\beta(0)=\begin{cases}
   \prod_{k=\alpha}^\beta a_k & \text{if } \beta-\alpha \text{ is odd,} \\
    0      & \text{if } \beta-\alpha \text{ is even.}
  \end{cases}\end{equation}
  \label{eq:tat0}
\begin{equation} \left . \frac{\mathrm{d}}{\mathrm{d}x}\mathcal{T}_\alpha^\beta(x)\right |_{x=0}=\begin{cases}
 0 & \text{if } \beta-\alpha \text{ is odd,} \\
  \sum_{k=0}^\frac{\beta-\alpha}{2} \left ( \prod_{j=\alpha;\;j\neq \alpha+2k}^\beta a_j \right )  & \text{if } \beta-\alpha \text{ is even.}
\label{eq:dat0}
  \end{cases}\end{equation}

These can be proved by simple induction arguments. 

Introduce similar notation for \eqref{eq:MainMatrixZ}
\begin{equation}
\label{eq:GeneralMatrix}
A'= \begin{pmatrix}
		0 & +a_1 & 0 & 0 & \cdots & 0 & -a_{n-1} \\
		-a_0 & 0 & +a_2 & 0 & \cdots & 0 &  0 \\
		0 & -a_1 & 0 & +a_3 & \cdots & 0 & 0 \\
		0 & 0 & -a_2 & 0 &  \cdots & 0 & 0 \\
		\vdots & \vdots & \vdots & \vdots & \ddots & \vdots & \vdots \\
		0 & 0 & 0 & 0 & \cdots & 0 & +a_{n-1}  \\
		+a_0 & 0 & 0 & 0 & \cdots & -a_{n-2} & 0
	\end{pmatrix},
\end{equation}
with the characteristic polynomial
\begin{equation}
\label{eq:FullCharacteristic}
\mathcal{A}(x)=\text{det}(xI-A).
\end{equation}

Then, if $n$ is odd, 
\begin{align}
\mathcal{A}(x)&=x\mathcal{T}_0^{n-2}(x)+a_{n-1}a_{n-2}\mathcal{T}_0^{n-3}(x)+a_{n-1}a_0\mathcal{T}_1^{n-2}(x) \nonumber \\
&=\mathcal{T}_0^{n-1}(x)+a_0a_{n-1}\mathcal{T}_1^{n-2}(x) \label{eq:FullRecurrence}
\end{align}
This can be demonstrated by expanding by minors along the last row and column of $xI-A$. Recall that $n$ is odd for the relevant problem ($n=|\Sigma_{\mathbf{a}}|$ from equation \eqref{eq:ClassSizeZ}).

We first show that in case (i) there is some non-zero real eigenvalue. Because of the Hamiltonian nature of the system this means there is a plus/minus pair of eigenvalues and so there is linear instability. This is then extended to show that under certain conditions there is pair of real eigenvalues with an explicit lower bound independent of $N$.

\begin{theorem}[Real Eigenvalues in case (i)]
\label{thm:realevs}
If $\rho_0<0$, and $\rho_k\geq 0$ for all $k=1,2,...,n-1$ and $\rho_k=0$ for at most one of  $k=1,2,...,n-1$, then for sufficiently large $N$, \eqref{eq:MainMatrixZ} has a non-zero real eigenvalue.
\end{theorem}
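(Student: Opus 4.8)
The plan is to study the characteristic polynomial $\mathcal{A}(x)$ from \eqref{eq:FullRecurrence} in the regime $x > 0$ and locate a sign change. Recall the matrix $A'$ of the class in case (i) has exactly one negative entry among the $\rho_k$ (namely $\rho_0 < 0$), with all other $\rho_k \geq 0$ and at most one of them zero. First I would apply the similarity transformation $T$ of \eqref{eq:RealSkewSymmetricG} to $A'$ (its circulant analogue): since all $\rho_k$ for $k = 1, \dots, n-1$ are nonnegative and only $\rho_0$ is negative, the conjugated matrix has the products $\rho_k \rho_{k+1}$ appearing. The key structural point is that $\rho_0 \rho_1$ and $\rho_0 \rho_{n-1}$ are the only (weakly) negative products, so $A'$ is a small perturbation of a skew-symmetric matrix — only two of the $2n$ off-diagonal entries fail to be purely imaginary after rescaling. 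This is what makes a single real pair plausible.

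The main computation is to evaluate $\mathcal{A}(x)$ at $x = 0$ and examine its behaviour for large $x$, using the recursion \eqref{eq:FullRecurrence} together with the formulas \eqref{eq:tat0}--\eqref{eq:dat0} for $\mathcal{T}_\alpha^\beta$ at $0$. Since $n$ is odd, $n-1$ is even, so $\mathcal{T}_0^{n-1}(0) = 0$; and the segment $\mathcal{T}_1^{n-2}$ has $n-3$ entries, with $n-3$ even, so $\mathcal{T}_1^{n-2}(0) = 0$ as well. Thus $\mathcal{A}(0) = 0$, consistent with $J'$ having a kernel. The real information is in the derivative (or the lowest-order nonvanishing coefficient) of $\mathcal{A}$ at $0$: using \eqref{eq:dat0}, $\mathcal{T}_0^{n-1}{}'(0)$ is a sum of products each omitting exactly one $\rho_j$, exactly one of which (the term omitting $\rho_0$) is a product of all-positive factors while all the rest contain the single negative factor $\rho_0$. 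With $\rho_0$ negative and $|\rho_0|$ bounded below (it is bounded away from $0$ since it corresponds to a fixed interior lattice point, while the other $\rho_k$ can be made small only near the boundary), the term omitting $\rho_0$ dominates once the remaining geometry is controlled — so $\mathcal{A}'(0) < 0$ for $N$ large. Here the hypothesis that at most one $\rho_k = 0$ is essential: it guarantees the dominant term (the one omitting $\rho_0$) is nonzero. Since $\mathcal{A}(x) \to +\infty$ as $x \to +\infty$ (leading term $x^n$ with $n$ odd... actually $x^n$, $n$ odd, so this needs care — rather the $\pm$ symmetry of eigenvalues forces $\mathcal{A}$ to be, up to the trivial $x$ factor, a polynomial in $x^2$, and $\mathcal{A}(x)/x \to +\infty$), combined with $\mathcal{A}'(0) < 0$ we get a positive real root by the intermediate value theorem.

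The main obstacle I anticipate is controlling the sign and size of $\mathcal{A}'(0)$ uniformly in $N$: one must show the single all-positive term involving $\prod_{j \neq 0} \rho_j$ genuinely dominates the other $O(n)$ terms, each of which carries a factor $\rho_0 < 0$ but also products of the (possibly large, since $|\widehat{\mathbf{a}+k\mathbf{p}}|$ can be large) $\rho_k$'s — note $\rho_k \to 1/|\mathbf{p}|^2$ as $|\mathbf{a}+k\mathbf{p}| \to \infty$, so the $\rho_k$ for large $|k|$ are uniformly bounded, which should make the comparison tractable but requires a careful estimate. The wrap-around term $a_0 a_{n-1}\mathcal{T}_1^{n-2}(x)$ coming from the Zeitlin circulant structure must also be shown not to spoil the sign; at $x$ near $0$ its contribution to $\mathcal{A}'(0)$ is $\rho_0 \rho_{n-1}$ times $\mathcal{T}_1^{n-2}(0) = 0$, so it contributes only through higher-order terms, which is why "sufficiently large $N$" (pushing the zero structure to the right place) suffices rather than giving a clean bound. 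I would finish by invoking the Hamiltonian $\pm\lambda, \pm\bar\lambda$ symmetry to upgrade "a positive real root of $\mathcal{A}$" to "a non-zero real eigenvalue of $A'$" (and hence of \eqref{eq:MainMatrixZ}).
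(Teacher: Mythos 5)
Your proposal follows the same strategy as the paper's proof: show the characteristic polynomial has constant term zero, show its linear coefficient $\mathcal{A}'(0)=\sum_{k=0}^{n-1}\prod_{j\neq k}\rho_j$ is strictly negative for sufficiently large $N$, and deduce a positive real root (your intermediate-value argument from $\mathcal{A}<0$ just to the right of $0$ up to $\mathcal{A}\to+\infty$ is a perfectly good substitute for the paper's parity-of-roots contradiction). However, your justification of the central inequality $\mathcal{A}'(0)<0$ is internally inconsistent. The term omitting $\rho_0$, namely $\prod_{j\neq 0}\rho_j$, is the unique \emph{positive} term of the sum; every term that retains the factor $\rho_0$ is negative. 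If, as you assert, ``the term omitting $\rho_0$ dominates,'' the conclusion would be $\mathcal{A}'(0)>0$ --- the opposite of what you need. The correct mechanism, which is what the paper uses and which resolves the ``main obstacle'' you flag rather than leaving it open, is that the $n-1$ negative terms collectively overwhelm the single positive one: writing $\mathcal{A}'(0)=\bigl(\prod_j\rho_j\bigr)\bigl(\sum_k 1/\rho_k\bigr)$ with $\prod_j\rho_j<0$, one notes that $\rho_k=1/|\mathbf{p}|^2-1/|\widehat{\mathbf{a}+k\mathbf{p}}|^2<1/|\mathbf{p}|^2$, hence $1/\rho_k>|\mathbf{p}|^2$ for every $k\neq 0$, so $\sum_{k\neq 0}1/\rho_k>(n-1)|\mathbf{p}|^2$ grows without bound while $1/\rho_0$ is a fixed negative number independent of $N$. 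This is precisely where ``sufficiently large $N$'' enters.

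Your handling of the degenerate case is inverted in the same way. If $\rho_{k_0}=0$ for some $k_0\neq 0$, then the term omitting $\rho_0$ \emph{vanishes}, since it contains the factor $\rho_{k_0}$; the unique surviving term of the elementary symmetric sum is the one omitting $\rho_{k_0}$, which contains $\rho_0<0$ and is therefore negative (and here no largeness of $N$ is needed at all). The hypothesis that at most one $\rho_k$ vanishes is there to ensure that at least one term of the sum survives, not to keep $\prod_{j\neq 0}\rho_j$ nonzero. So while your skeleton and all the intermediate claims match the paper, the two sign arguments at the heart of the proof would not survive being written out, and they need to be replaced by the reciprocal-sum estimate above.
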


\begin{proof}
The characteristic polynomial $\mathcal{A}$ for odd $n$ has leading term $x^n$ and constant term $0$. By combining \eqref{eq:FullRecurrence} and \eqref{eq:dat0} (noting that $n$ is odd, so $n-1$ and $n-3$ are even), the linear coefficient is given by
\begin{align}
\left . \frac{\mathrm{d}\mathcal{A}}{\mathrm{d}x}\right |_{x=0}& =\left . \frac{\mathrm{d}}{\mathrm{d}x}\mathcal{T}_0^{n-1}(x)\right |_0+a_0a_{n-1}\left . \mathcal{T}_1^{n-2}(x) \right |_0 \notag \\
&= \sum_{k=0}^\frac{n-1}{2} \left ( \prod_{j=0;\;j\neq 2k}^{n-1} a_j \right )+ \sum_{k=0}^\frac{n-3}{2} \left ( \prod_{j=1;\;j\neq 1+2k}^{n-2} a_j \right )  \notag \\
&= \sum_{k=0}^{n-1} \left ( \prod_{j=0;\;j\neq k}^{n-1} a_j \right )\label{eq:AlwaysValid} \\
&=\left (\prod_{j=0}^{n-1} a_j \right ) \left ( \sum_{k=0}^{n-1} \frac{1}{a_k} \right ). \label{eq:SometimesValid} 
\end{align}
Note that \eqref{eq:SometimesValid} is only valid for $a_k\neq 0$ for all $k$, but \eqref{eq:AlwaysValid} is always valid \footnote{The expression in equation \eqref{eq:AlwaysValid} is the so-called $n$-$1^{\textrm{st}}$ {\em elementary symmetric polynomial} in the variables $a_j$} . Now let $a_k=\rho_k$.
First assume $\rho_k>0$ for all $k\neq 0$. As $\rho_k\to \frac{1}{|\mathbf{p}|^2}>0$ as $|\widehat{\mathbf{a}+k\mathbf{p}}|\to \infty$ and the size $n$ of the classes grows linearly with $N$, $ \sum_{j=0}^{n-1} \frac{1}{a_j}=\sum_{j=0}^{n-1} \frac{1}{\rho_j}>0$ for sufficiently large $N$ (where $\rho_k=a_{k}$). However $\prod_{j=0}^{n-1} a_j <0$ as $a_0<0$, $a_j>0$ for all $j=1,..,n-1$, and hence the linear coefficient of the characteristic polynomial is less than zero.

If $\rho_k=0$ for exactly one $k$, then \eqref{eq:AlwaysValid} consists of only one term, $\prod_{j=0;\;j\neq k}^{n-1} \rho_j$. This is less than zero as $\rho_0<0$ and $\rho_j>0$ for all $j\neq 0,k$. 

If $\rho_k=0$ for more than one value of $k$, then the linear term is zero and we cannot apply this argument.

As the constant term is zero, and the linear term is non-zero, then the lowest order non-zero coefficient of the polynomial is negative.

We now argue by contradiction.
Assume all roots of the polynomial are imaginary (say $i\omega_k$) or complex ($\gamma_j+i\delta_j$) or zero. Then because eigenvalues occur in positive and negative pairs as well as conjugate pairs the polynomial has the form
\begin{align}\mathcal{A}&(x)= \\
&x^{n_1}\prod_{k=1}^{n_2}(x-i\omega_k)(x+i\omega_k)\prod_{j=1}^{n_3}\left [ (x-\gamma_j-i\delta_j)(x-\gamma_j+i\delta_j)(x+\gamma_j-i\delta_j)(x+\gamma_j+i\delta_j)\right ]  \nonumber \\
&=x^{n_1}\prod_{k=1}^{n_2}(x^2+\omega_k^2)\prod_{j=1}^{n_3}\left (x^4-2x^2(\gamma_j^2-\delta_j^2)+(\gamma_j^2+\delta_j^2)^2\right ). 
\end{align}
The lowest order non-zero coefficient (of $x^{n_1}$) is
$\prod_{k=1}^{n_2}(\omega_k^2)\prod_{j=1}^{n_3}\left ((\gamma_j^2+\delta_j^2)^2\right )>0$. Thus by contradiction there must be some real eigenvalue, which will occur in a plus and minus pair.
 
\end{proof}

For the Galerkin case \eqref{eq:tmatrix}, although the result is the same we cannot apply the same proof.
Since we were not able to show that these eigenvalues remain bounded away from the imaginary axis when 
$N \to \infty$ we now show in a different way that for both truncations there are eigenvalues whose 
real part is bounded away from zero for $N \to \infty$. These proofs have stricter conditions on the $\rho_k$
then Theorem~\ref{thm:realevs}, but we will see that they can be met for most $\mathbf{p}$.

\begin{lemma}[Lower Bound for Real Eigenvalue (Zeitlin)]
\label{thm:LowerBoundZ}
If $a_0<0$, and $a_k> 0 $ for all $k\neq 0$, and $a_0+a_2<0$, then
\begin{equation}
	\mathcal{A}(\sqrt{-a_1(a_0+a_2)})< 0.
\end{equation}
\end{lemma}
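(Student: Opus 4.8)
The plan is to evaluate $\mathcal{A}$ at $x_* = \sqrt{-a_1(a_0+a_2)}$ using the recurrence \eqref{eq:FullRecurrence}, namely $\mathcal{A}(x) = \mathcal{T}_0^{n-1}(x) + a_0 a_{n-1}\mathcal{T}_1^{n-2}(x)$, and to show both summands are negative at $x_*$. Note first that $x_*$ is a genuine positive real number: $a_1>0$ and $a_0+a_2<0$ by hypothesis, so $-a_1(a_0+a_2)>0$. Since $a_0<0$ and $a_{n-1}>0$ we have $a_0 a_{n-1}<0$, so the second term $a_0 a_{n-1}\mathcal{T}_1^{n-2}(x_*)$ is negative provided $\mathcal{T}_1^{n-2}(x_*)>0$; but $a_1,\dots,a_{n-2}$ are all strictly positive, so \eqref{eq:TPositive} gives $\mathcal{T}_1^{n-2}(x)>0$ for every $x>0$, in particular at $x_*$. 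Thus the whole burden is to show $\mathcal{T}_0^{n-1}(x_*)<0$.

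For that I would peel off the first two steps of the top-to-bottom recurrence \eqref{eq:recurrence1}. We have $\mathcal{T}_0^0(x)=1$ and $\mathcal{T}_0^1(x) = x^2 + a_1 a_0$. At $x=x_*$, $x_*^2 = -a_1(a_0+a_2) = -a_1 a_0 - a_1 a_2$, so $\mathcal{T}_0^1(x_*) = -a_1 a_2 < 0$ (since $a_1,a_2>0$). Now apply \eqref{eq:recurrence1} in the form $\mathcal{T}_0^\beta(x) = x\,\mathcal{T}_0^{\beta-1}(x) + a_\beta a_{\beta-1}\mathcal{T}_0^{\beta-2}(x)$ for $\beta = 2, 3, \dots, n-1$. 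I claim that $\mathcal{T}_0^\beta(x_*) < 0$ for all $\beta \geq 1$, proved by induction: the base case $\beta=1$ is done above, and for the inductive step $\mathcal{T}_0^\beta(x_*) = x_*\,\mathcal{T}_0^{\beta-1}(x_*) + a_\beta a_{\beta-1}\mathcal{T}_0^{\beta-2}(x_*)$; here $x_*>0$ and $\mathcal{T}_0^{\beta-1}(x_*)<0$ by the inductive hypothesis, while $a_\beta a_{\beta-1}>0$ (both indices are $\geq 1$) and $\mathcal{T}_0^{\beta-2}(x_*)$ is either $\mathcal{T}_0^0(x_*)=1>0$ when $\beta=2$, or $\mathcal{T}_0^{\beta-2}(x_*)<0$ by induction when $\beta\geq 3$ — so the sign of the second term is not automatically controlled.

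That sign ambiguity in the second term is the main obstacle, and the fix is to strengthen the induction hypothesis so that it carries enough information. The natural strengthening is a two-term statement: prove simultaneously that $\mathcal{T}_0^{\beta}(x_*) < 0$ and that $x_* \mathcal{T}_0^{\beta}(x_*) + a_{\beta+1}a_{\beta}\mathcal{T}_0^{\beta-1}(x_*) < 0$, i.e. that the combination appearing at the next recurrence step stays negative; equivalently one tracks the pair $(\mathcal{T}_0^{\beta-1}(x_*), \mathcal{T}_0^\beta(x_*))$ and shows it stays in a cone of the form $\{(u,v): v<0,\ x_* v + (\text{positive})\,u < 0\}$ that is preserved by the recurrence. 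Actually the cleanest route may be to show by induction that $\mathcal{T}_0^{\beta}(x_*) < 0$ \emph{and} $x_*|\mathcal{T}_0^\beta(x_*)| \ge a_{\beta+1}a_\beta |\mathcal{T}_0^{\beta-1}(x_*)|$ fails — so instead I would look for the right monotone quantity, perhaps the ratio $r_\beta = \mathcal{T}_0^{\beta}(x_*)/\mathcal{T}_0^{\beta-1}(x_*)$, and show the recurrence $r_\beta = x_* + a_\beta a_{\beta-1}/r_{\beta-1}$ keeps $r_\beta$ in a suitable interval (e.g. $r_\beta \le -$ something, or $r_\beta$ stays in $(-\infty,0)$ bounded away from where $a_\beta a_{\beta-1}/r_{\beta-1}$ could overwhelm $x_*$). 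Getting this invariant exactly right — and verifying it is consistent with only the hypotheses $a_0<0$, $a_k>0$ for $k\neq 0$, $a_0+a_2<0$ — is where the real work lies; once the sign $\mathcal{T}_0^{n-1}(x_*)<0$ is secured, combining it with the already-established negativity of $a_0 a_{n-1}\mathcal{T}_1^{n-2}(x_*)$ via \eqref{eq:FullRecurrence} finishes the proof.
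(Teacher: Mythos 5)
Your reduction is sound as far as it goes: writing $\mathcal{A}(x_*)=\mathcal{T}_0^{n-1}(x_*)+a_0a_{n-1}\mathcal{T}_1^{n-2}(x_*)$ with $x_*=\sqrt{-a_1(a_0+a_2)}>0$, and disposing of the second summand via $a_0a_{n-1}<0$ and \eqref{eq:TPositive}, is exactly the right start. But the proof is not complete: the entire content of the lemma is the inequality $\mathcal{T}_0^{n-1}(x_*)<0$, and you never establish it. Your proposed induction ``$\mathcal{T}_0^{\beta}(x_*)<0$ for all $\beta\ge 1$'' is in fact false at $\beta=2$: computing the determinant directly, $\mathcal{T}_0^2(x)=x\left(x^2+a_1(a_0+a_2)\right)$, so $x_*$ is \emph{by construction} the positive root of $\mathcal{T}_0^2$ and $\mathcal{T}_0^2(x_*)=0$. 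You correctly sense that the second term of the three-term recurrence is the obstacle, but the repairs you list (a strengthened two-term hypothesis, a cone invariant, the ratio $r_\beta$) are only named, not carried out, and you explicitly defer ``where the real work lies.'' That deferred work is the lemma.

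The missing observation is precisely $\mathcal{T}_0^2(x_*)=0$ together with $\mathcal{T}_0^1(x_*)=x_*^2+a_0a_1=-a_1a_2<0$; once you have these, your own route closes with no strengthened hypothesis: $\mathcal{T}_0^3(x_*)=a_3a_2\,\mathcal{T}_0^1(x_*)<0$, then $\mathcal{T}_0^4(x_*)=x_*\mathcal{T}_0^3(x_*)<0$, and from then on both terms of \eqref{eq:recurrence1} are negative. The paper packages this differently and avoids induction altogether: it expands $\mathcal{A}$ from the \emph{low-index} end using \eqref{eq:TRecurrence}, obtaining
\begin{equation*}
\mathcal{A}(x)=\left(x^3+a_1(a_0+a_2)x\right)\mathcal{T}_3^{n-1}(x)+\left(x^2+a_0a_1\right)a_2a_3\,\mathcal{T}_4^{n-1}(x)+a_0a_{n-1}\mathcal{T}_1^{n-2}(x).
\end{equation*}
At $x=x_*$ the first coefficient vanishes, the second equals $-a_1a_2^2a_3<0$, and the remaining factors $\mathcal{T}_4^{n-1}$, $\mathcal{T}_1^{n-2}$ involve only positive $a_k$'s and hence are positive by \eqref{eq:TPositive}. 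Either way, the step you left open is the one that requires the specific algebraic identity satisfied by $x_*$, not a generic monotonicity or invariant-cone argument.
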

\begin{proof}
By expanding \eqref{eq:FullRecurrence} using \eqref{eq:TRecurrence},
\begin{align}
\mathcal{A}(x)= & (x^3+(a_0a_1+a_1a_2)x)\mathcal{T}_3^{n-1}(x)  \\
& +(x^2+a_0a_1)a_2a_3\mathcal{T}_4^{n-1}(x) \notag \\
& +a_0a_{n-1}\mathcal{T}_1^{n-2}(x). \notag
\end{align}
Thus 
\begin{align}
\mathcal{A}(\sqrt{-a_1(a_0+a_2)})= & -a_1a_2^2a_3\mathcal{T}_4^{n-1}(\sqrt{-a_1(a_0+a_2)}) \ \\
& +a_0a_{n-1}\mathcal{T}_1^{n-2}(\sqrt{-a_2(a_1+a_3)}). \notag
\end{align}
As $a_1,a_2,a_3,a_{n-1}> 0$, $a_0<0$ and the $\mathcal{T}$ terms are positive by \eqref{eq:TPositive} the result follows. 
\end{proof}

We make a similar case for the Galerkin truncation
\begin{lemma}[Lower Bound for Real Eigenvalue (Galerkin)]
\label{thm:LowerBoundG}
If $\beta>0$, $a_0<0$, and $a_k> 0 $ for all $k\neq 0$, and $a_0+a_2<0$, then
\begin{equation}
	\mathcal{T}_\alpha^\beta(\sqrt{-a_1(a_0+a_2)})< 0.
\end{equation}
\end{lemma}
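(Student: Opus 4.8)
The plan is to mirror the structure of the proof of Lemma~\ref{thm:LowerBoundZ}, but now working with the Galerkin characteristic polynomial $\mathcal{T}_\alpha^\beta(x)$ and its recurrence \eqref{eq:TRecurrence} (expansion from top left) rather than the circulant polynomial $\mathcal{A}(x)$. Set $x_0 = \sqrt{-a_1(a_0+a_2)}$, which is a well-defined positive real number precisely because $a_0+a_2<0$ and $a_1>0$. First I would apply \eqref{eq:TRecurrence} once to peel off the $\alpha$ and $\alpha+1$ rows: write, with $\alpha=0$ (the relevant index, so that $a_0<0$ is the leading coefficient),
\begin{equation}
\mathcal{T}_0^\beta(x) = x\,\mathcal{T}_1^\beta(x) + a_0 a_1\,\mathcal{T}_2^\beta(x),
\end{equation}
and then apply it again to $\mathcal{T}_1^\beta(x)$ to get $\mathcal{T}_1^\beta(x) = x\,\mathcal{T}_2^\beta(x) + a_1 a_2\,\mathcal{T}_3^\beta(x)$. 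Substituting gives
\begin{equation}
\mathcal{T}_0^\beta(x) = (x^2 + a_0 a_1)\,\mathcal{T}_2^\beta(x) + x\,a_1 a_2\,\mathcal{T}_3^\beta(x),
\end{equation}
hmm, but that is not yet the clean cancellation I want. The better grouping is to combine the two so the coefficient of $\mathcal{T}_2^\beta$ becomes $x^2 + a_0a_1 + a_1 a_2 = x^2 + a_1(a_0+a_2)$; indeed $\mathcal{T}_0^\beta(x) = x\mathcal{T}_1^\beta(x)+a_0a_1\mathcal{T}_2^\beta(x)$ and $x\mathcal{T}_1^\beta(x) = x^2\mathcal{T}_2^\beta(x)+xa_1a_2\mathcal{T}_3^\beta(x)$, so
\begin{equation}
\mathcal{T}_0^\beta(x) = \bigl(x^2 + a_1(a_0+a_2)\bigr)\mathcal{T}_2^\beta(x) + x\,a_1 a_2\,\mathcal{T}_3^\beta(x).
\end{equation}

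The key observation is then that $x_0^2 + a_1(a_0+a_2) = -a_1(a_0+a_2) + a_1(a_0+a_2) = 0$, so the first term vanishes at $x=x_0$, leaving
\begin{equation}
\mathcal{T}_0^\beta(x_0) = x_0\, a_1 a_2\,\mathcal{T}_3^\beta(x_0).
\end{equation}
Wait — this would give a positive value, not a negative one, since $x_0>0$, $a_1,a_2>0$, and $\mathcal{T}_3^\beta(x_0)>0$ by \eqref{eq:TPositive} (all of $a_3,\dots,a_\beta$ are positive). So the naive grouping has the sign backwards, which tells me the correct peeling must keep the $a_0<0$ factor visible. The right move is to group as $\mathcal{T}_0^\beta(x) = (x^2+a_0a_1)\mathcal{T}_2^\beta(x) + x a_1 a_2 \mathcal{T}_3^\beta(x)$ and then substitute $x^2 = -a_1(a_0+a_2)$, giving $x_0^2 + a_0 a_1 = -a_1 a_0 - a_1 a_2 + a_0 a_1 = -a_1 a_2$, hence
\begin{equation}
\mathcal{T}_0^\beta(x_0) = -a_1 a_2\,\mathcal{T}_2^\beta(x_0) + x_0\, a_1 a_2\,\mathcal{T}_3^\beta(x_0) = a_1 a_2\bigl(x_0\,\mathcal{T}_3^\beta(x_0) - \mathcal{T}_2^\beta(x_0)\bigr).
\end{equation}
Now I would use the recurrence once more in the form $\mathcal{T}_2^\beta(x) = x\,\mathcal{T}_3^\beta(x) + a_2 a_3\,\mathcal{T}_4^\beta(x)$, so that $x_0\mathcal{T}_3^\beta(x_0) - \mathcal{T}_2^\beta(x_0) = -a_2 a_3\,\mathcal{T}_4^\beta(x_0)$, yielding finally
\begin{equation}
\mathcal{T}_0^\beta(x_0) = -a_1 a_2^2 a_3\,\mathcal{T}_4^\beta(x_0) < 0,
\end{equation}
since $a_1,a_2,a_3>0$ and $\mathcal{T}_4^\beta(x_0) > 0$ by \eqref{eq:TPositive}. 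This is exactly the analogue of the final line in the proof of Lemma~\ref{thm:LowerBoundZ}, minus the boundary term $a_0 a_{n-1}\mathcal{T}_1^{n-2}$ which had no place here because the Galerkin matrix is genuinely tridiagonal rather than circulant.

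Two small caveats I would address for rigour. First, the chain of recurrences I invoked requires $\beta \geq 4$ so that $\mathcal{T}_4^\beta$, $\mathcal{T}_3^\beta$, $\mathcal{T}_2^\beta$ are all defined; the cases $\beta \in \{1,2,3\}$ (all covered by the hypothesis $\beta>0$) should be checked by hand using the base cases $\mathcal{T}_\alpha^\alpha=1$, $\mathcal{T}_\alpha^{\alpha+1}(x)=x^2+a_{\alpha+1}a_\alpha$ — e.g. for $\beta=1$, $\mathcal{T}_0^1(x_0) = x_0^2 + a_1 a_0 = -a_1(a_0+a_2)+a_1a_0 = -a_1 a_2 < 0$; for $\beta=2$, $\mathcal{T}_0^2(x_0) = x_0\mathcal{T}_1^2(x_0) + a_0 a_1 \mathcal{T}_2^2(x_0) = x_0(x_0^2+a_1a_2) + a_0a_1 = x_0(-a_1(a_0+a_2)+a_1a_2) + a_0a_1 = x_0 a_1(-a_0) + a_0 a_1 < 0$ since the first term is nonpositive ($-a_0>0$ but multiplied by... wait $x_0 a_1(-a_0)>0$), so actually $\mathcal{T}_0^2(x_0) = -x_0 a_0 a_1 + a_0 a_1 = a_0 a_1(1 - x_0)$, whose sign depends on whether $x_0 \gtrless 1$ — so the small cases genuinely need a slightly more careful look, and I would just verify $\beta=1,2,3$ directly against whatever additional constraints the application in Lemma~\ref{lem:RealitySatisfied} supplies. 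Second, I am implicitly taking $\alpha=0$; the statement as written has general $\alpha$, but the $a_k$ are indexed starting from the class structure, so the intended reading is that the distinguished negative entry sits at the top-left, i.e. $\alpha$ plays the role of $0$ above — I would rename indices accordingly or note the shift. The main obstacle is therefore not the algebra (which is a three-step unwinding of \eqref{eq:TRecurrence} with one sign to get right) but rather pinning down the precise index conventions and the small-$\beta$ edge cases so the statement is literally true as quoted; the core computation is routine once the grouping $x_0^2 + a_0 a_1 = -a_1 a_2$ is spotted.
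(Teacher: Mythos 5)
Your core algebraic step --- peeling off three applications of the recurrence and using $x_0^2=-a_1(a_0+a_2)$ to collapse $\mathcal{T}_0^\beta(x_0)$ to $-a_1a_2^2a_3\,\mathcal{T}_4^\beta(x_0)$ --- is correct, and it is essentially the same manipulation the paper uses for the Zeitlin case in Lemma~\ref{thm:LowerBoundZ}. But your second ``caveat'' is where the proof actually breaks: the distinguished negative entry does \emph{not} sit at the top-left in the Galerkin setting. The matrix \eqref{eq:MainMatrixG} is indexed from $-m_1$ to $m_2$, the unstable lattice point $\mathbf{a}$ corresponds to $\rho_0<0$ sitting strictly in the \emph{interior} of the class (both $m_1$ and $m_2$ grow with $N$), and the lemma is invoked with $\alpha=-m_1<0$ and $\beta=m_2>0$. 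That is precisely why the hypotheses single out the index $0$ while leaving $\alpha$ general, and why ``$\beta>0$'' appears as an assumption at all. Your argument starts the peeling at the negative coefficient and therefore has no way to absorb the block of positive coefficients $a_{-m_1},\dots,a_{-1}$ sitting to its left; it proves only the boundary case $\alpha=0$, which is not the case needed. Your own check of $\beta=2$ in that convention (sign depending on whether $x_0\gtrless 1$) is a symptom of the misplaced index rather than a genuine edge case of the lemma as used.

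The paper handles the interior case by expanding from the right-hand end: it writes $\mathcal{T}_\alpha^{1}$ and $\mathcal{T}_\alpha^{2}$ in terms of the ``prefix'' polynomials $\mathcal{T}_\alpha^{-1}$ and $\mathcal{T}_\alpha^{-2}$, which are positive at $x_0$ by \eqref{eq:TPositive} since all of $a_{-m_1},\dots,a_{-1}$ are positive; evaluating at $x_0$ gives $\mathcal{T}_\alpha^{1}(x_0)<0$ and $\mathcal{T}_\alpha^{2}(x_0)=-a_1a_0^2a_{-1}\mathcal{T}_\alpha^{-2}(x_0)<0$. It then observes that for $\gamma>2$ the recurrence $\mathcal{T}_\alpha^{\gamma}=x\mathcal{T}_\alpha^{\gamma-1}+a_\gamma a_{\gamma-1}\mathcal{T}_\alpha^{\gamma-2}$ has only positive coefficients ($x_0>0$ and $a_\gamma a_{\gamma-1}>0$), so negativity propagates by induction up to $\gamma=\beta$. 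Your computation could be rescued --- for instance via a splitting identity of the form $\mathcal{T}_\alpha^\beta=\mathcal{T}_\alpha^{-1}\mathcal{T}_0^\beta+a_{-1}a_0\,\mathcal{T}_\alpha^{-2}\mathcal{T}_1^\beta$, in which your bound $\mathcal{T}_0^\beta(x_0)<0$, the sign $a_{-1}a_0<0$, and the positivity of the remaining factors would combine to give the result --- but as written the proposal does not establish the statement in the form in which it is applied in Theorem~\ref{thm:main}.
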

\begin{proof}
Begin by noting that 
\begin{equation}
	\mathcal{T}_\alpha^\gamma(\sqrt{-a_1(a_0+a_2)})\geq 0
\end{equation} 
for all $\gamma<0$ by \eqref{eq:TPositive}.
By expanding \eqref{eq:TRecurrence},
\begin{align}
\mathcal{T}_\alpha^1(x)&=  (x^2+a_1a_0)\mathcal{T}_\alpha^{-1}(x) +a_0a_{-1}x\mathcal{T}_\alpha^{-2}(x) \\
	\mathcal{T}_\alpha^1(\sqrt{-a_1(a_0+a_2)})&=-a_1a_2\mathcal{T}_\alpha^{-1}(\sqrt{-a_1(a_0+a_2)})\\
	 & +a_0a_{-1}\sqrt{-a_1(a_0+a_2)}\mathcal{T}_\alpha^{-2}(\sqrt{-a_1(a_0+a_2)}). \notag
\end{align}
As $a_0<0$, $a_k>0$ for all $k\neq 0$, and $\mathcal{T}_\alpha^{-1},\mathcal{T}_\alpha^{-2}$ take positive values for positive arguments, $\mathcal{T}_\alpha^1(\sqrt{-a_1(a_0+a_2)})<0$.

Now similar to the proof of \ref{thm:LowerBoundZ},
\begin{align}
\mathcal{T}_\alpha^{2}(\sqrt{-a_1(a_0+a_2)})= & (x^2+a_1(a_0+a_2))x\mathcal{T}_\alpha^{-1}(\sqrt{-a_1(a_0+a_2)}) \\
 & +(x^2+a_2a_1)a_0a_{-1}\mathcal{T}_\alpha^{-2}(\sqrt{-a_1(a_0+a_2)}) \notag \\
& = -a_1a_0^2a_{-1}\mathcal{T}_\alpha^{-2}(\sqrt{-a_1(a_0+a_2)}). \notag
\end{align}
As $a_1,a_{-1}> 0$, $a_0<0$ and the $\mathcal{T}$ terms are positive by \eqref{eq:TPositive} it follows that $$\mathcal{T}_\alpha^{2}(\sqrt{-a_1(a_0+a_2)})<0.$$ 

Now if $\gamma>2$, we can make a recursive argument:
\begin{equation}
\mathcal{T}_\alpha^{\gamma}(x)=x\mathcal{T}_\alpha^{\gamma-1}(x)+a_\gamma a_{\gamma-1}\mathcal{T}_\alpha^{\gamma-2}(x).
\end{equation}
Now $x=\sqrt{-a_1(a_0+a_2)}>0$ and $a_\gamma a_{\gamma-1}>0$ (as $\gamma>2$). By inductive reasoning, as $\mathcal{T}_\alpha^{1}<0,$ $\mathcal{T}_\alpha^{2}<0$, then $\mathcal{T}_\alpha^{\gamma}<0$ for all $\gamma>2$. Therefore $T_\alpha^\beta(\sqrt{-a_1(a_0+a_2)})<0$.
\end{proof}

We can also make a similar construction if $a_0<0$, $a_k\geq 0$ for all $k\neq 0$ and $a_0+a_{n-2}<0$. In this case, $\mathcal{A}(\sqrt{-a_{n-1}(a_0+a_{n-2})})\leq 0$. 

\begin{lemma}
Assuming the same conditions as Lemma \ref{thm:LowerBoundZ} and Lemma \ref{thm:LowerBoundG}, there exists some $$x_1^*,x_2^*>\sqrt{-a_1(a_0+a_2)}$$ such that $\mathcal{T}_\alpha^\beta(x_1^*)=0$, $\mathcal{A}(x_2^*)=0$.
\end{lemma}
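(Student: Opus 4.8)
The plan is to deduce this from the two preceding lemmas by a one-line intermediate value argument, so the only work is assembling the ingredients. First I would observe that the quantity $\sqrt{-a_1(a_0+a_2)}$ is a well-defined positive real number under the standing hypotheses: $a_1>0$ and $a_0+a_2<0$ force $-a_1(a_0+a_2)>0$. Call this value $r$. Lemma~\ref{thm:LowerBoundG} gives $\mathcal{T}_\alpha^\beta(r)<0$ and Lemma~\ref{thm:LowerBoundZ} gives $\mathcal{A}(r)<0$, so in both cases the polynomial is strictly negative at the endpoint $r$.

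Next I would use the fact that both $\mathcal{T}_\alpha^\beta(x)=\det(xI-T_\alpha^\beta)$ and $\mathcal{A}(x)=\det(xI-A')$ are \emph{monic} polynomials of positive degree — of degree $\beta-\alpha+1\ge 1$ (since $\alpha<\beta$, and $\beta>0$ is assumed) and of degree $n\ge 1$ respectively. Hence $\mathcal{T}_\alpha^\beta(x)\to+\infty$ and $\mathcal{A}(x)\to+\infty$ as $x\to+\infty$, so there exist points $R_1,R_2>r$ with $\mathcal{T}_\alpha^\beta(R_1)>0$ and $\mathcal{A}(R_2)>0$. Applying the intermediate value theorem to the continuous functions $\mathcal{T}_\alpha^\beta$ on $[r,R_1]$ and $\mathcal{A}$ on $[r,R_2]$ yields zeros $x_1^*\in(r,R_1)$ and $x_2^*\in(r,R_2)$, and since the polynomials are strictly negative at $r$ these zeros satisfy $x_1^*,x_2^*>r=\sqrt{-a_1(a_0+a_2)}$, as required.

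There is essentially no obstacle here; the substantive content was already done in Lemmas~\ref{thm:LowerBoundZ} and~\ref{thm:LowerBoundG}. The only points needing a word of care are (a) confirming the square-root argument is positive, which is immediate, and (b) confirming the relevant polynomials have positive degree so that they actually blow up at $+\infty$ — this uses $\alpha<\beta$ for $\mathcal{T}_\alpha^\beta$ and $n\ge 1$ for $\mathcal{A}$, both of which hold in the setting of interest. It is worth remarking, although not strictly part of the statement, that $x_1^*$ and $x_2^*$ are real eigenvalues of $A$ and $A'$ respectively (the characteristic polynomials being evaluated here), so this lemma is exactly what upgrades the ``there exists a nonzero real eigenvalue'' of Theorem~\ref{thm:realevs} to an explicit lower bound $\sqrt{-a_1(a_0+a_2)}$ for that eigenvalue.
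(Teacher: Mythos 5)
Your proof is correct and follows essentially the same route as the paper: both argue that the characteristic polynomials are monic (hence tend to $+\infty$), are strictly negative at $\sqrt{-a_1(a_0+a_2)}$ by the two preceding lemmas, and so have a real root beyond that point by the intermediate value theorem. Your write-up is in fact slightly more careful than the paper's, since you explicitly check that $\sqrt{-a_1(a_0+a_2)}$ is a positive real number and that the polynomials have positive degree.
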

\begin{proof}
The leading order term of $\mathcal{A}(x)$ is always $x^n$ regardless of $a_0,...,a_{n-1}$, and $\mathcal{A}(x)$ is real for real $x$.  Thus $\lim_{x\to\infty}\mathcal{A}(x)>0$. But by Lemma \ref{thm:LowerBoundZ} $\mathcal{A}(\sqrt{-a_1(a_0+a_2)})<0$, and the result follows by the intermediate value theorem. 

The same argument  can be applied to $\mathcal{T}_\alpha^\beta(x)$, with the equivalent result.   
\end{proof}


We now turn our attention back to the context of our problem.


\begin{theorem}[Lower Bound for Real Eigenvalues in case (i)]
\label{thm:main}
If $\mathbf{a}\in D_\mathbf{p}$ and  ${\mathbf{a}+k\mathbf{p}}\not{\in} \bar{D}_\mathbf{p}$ (equivalently $\widehat{\mathbf{a}+k\mathbf{p}}\not{\in} \bar{D}_\mathbf{p}$) for $k=1,2,..,n-1$ and
\begin{equation} 
\lambda^\dagger=\sqrt{-\rho_1(\rho_0+\rho_2)}
\label{eq:LowerBound}
\end{equation}
is real, there exist $\lambda_1,\lambda_2> \lambda^\dagger$ such that $\lambda_1$ is an eigenvalue of \eqref{eq:MainMatrixG} and $\lambda_2$ is an eigenvalue of \eqref{eq:MainMatrixZ}. 

Similarly if $\lambda^\dagger=\sqrt{-\rho_{n-1}(\rho_0+\rho_{n-2})}$ is real there exist $\lambda_1,\lambda_2> \lambda^\dagger$ such that $\lambda_1$ is an eigenvalue of \eqref{eq:MainMatrixG} and $\lambda_2$ is an eigenvalue of \eqref{eq:MainMatrixZ}. 
\end{theorem}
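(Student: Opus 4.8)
The plan is to reduce Theorem~\ref{thm:main} to the already-established Lemmas~\ref{thm:LowerBoundZ} and~\ref{thm:LowerBoundG} by translating the geometric hypotheses into the sign conditions on the $a_k$ appearing in those lemmas. First I would set $a_k=\rho_k$ for the relevant range of indices, and recall from \eqref{eq:insidenegative} that $\mathbf{a}\in D_\mathbf{p}$ is equivalent to $\rho_0<0$, while ${\mathbf{a}+k\mathbf{p}}\not\in\bar D_\mathbf{p}$ for $k=1,\dots,n-1$ is equivalent to $\rho_k>0$ for all those $k$ (strict, since boundary points are excluded). The only extra hypothesis needed by the lemmas is $a_0+a_2<0$, i.e. $\rho_0+\rho_2<0$; but this is exactly the condition that $\lambda^\dagger=\sqrt{-\rho_1(\rho_0+\rho_2)}$ be real (given $\rho_1>0$), so the hypothesis that $\lambda^\dagger$ is real supplies precisely what is required. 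Thus all the sign conditions of Lemmas~\ref{thm:LowerBoundZ} and~\ref{thm:LowerBoundG} hold with $a_k=\rho_k$.

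Next I would apply the lemmas at the point $x=\lambda^\dagger$. For the Galerkin matrix $A$ of \eqref{eq:MainMatrixG}, its characteristic polynomial is $\mathcal{T}_{-m_1}^{m_2}(x)$ in the notation of \eqref{eq:tmatrix} (with $a_k=\rho_k$ and $\beta=m_2>0$), so Lemma~\ref{thm:LowerBoundG} gives $\mathcal{T}_{-m_1}^{m_2}(\lambda^\dagger)<0$. For the Zeitlin matrix $A'$ of \eqref{eq:MainMatrixZ}, the characteristic polynomial is $\mathcal{A}(x)$ as in \eqref{eq:FullRecurrence}, and Lemma~\ref{thm:LowerBoundZ} gives $\mathcal{A}(\lambda^\dagger)<0$. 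In both cases the leading term is $x^n$ with positive coefficient, and the polynomial is real for real $x$, so $\lim_{x\to\infty}$ of each is $+\infty$; the intermediate value theorem (this is exactly the unnamed Lemma immediately following Lemma~\ref{thm:LowerBoundG} in the excerpt) then produces roots $\lambda_1>\lambda^\dagger$ of the Galerkin characteristic polynomial and $\lambda_2>\lambda^\dagger$ of the Zeitlin characteristic polynomial, i.e. real eigenvalues of \eqref{eq:MainMatrixG} and \eqref{eq:MainMatrixZ} respectively, each exceeding $\lambda^\dagger$.

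For the second statement, I would run the symmetric version of the same argument: reindexing (or using the ``bottom-right to top-left'' recurrences \eqref{eq:TRecurrence}, \eqref{eq:FullRecurrence} and the remark after Lemma~\ref{thm:LowerBoundG} that handles $a_0+a_{n-2}<0$ with evaluation point $\sqrt{-a_{n-1}(a_0+a_{n-2})}$), the roles of the ``left end'' and ``right end'' of the class are interchanged, so that $\lambda^\dagger=\sqrt{-\rho_{n-1}(\rho_0+\rho_{n-2})}$ being real plays the role previously played by $\rho_0+\rho_2<0$, and the same evaluation-plus-IVT argument yields eigenvalues above this $\lambda^\dagger$. The main obstacle — and it is a mild bookkeeping one rather than a conceptual one — is making sure the index conventions line up correctly: the Galerkin class is indexed $-m_1,\dots,m_2$ with $\mathbf{a}$ at index $0$, whereas Lemmas~\ref{thm:LowerBoundZ}/\ref{thm:LowerBoundG} are stated with indices $0,\dots,n-1$ and $a_0<0$; I would need to verify that the cyclic/reflective relabelling used to match these (and to invoke the ``$a_0+a_{n-2}<0$'' variant) does not disturb the strict positivity of the remaining $\rho_k$ nor the identification of the evaluation point, which it does not since all of these conditions are invariant under reindexing within a class.
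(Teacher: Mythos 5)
Your proposal is correct and follows essentially the same route as the paper, whose proof of Theorem~\ref{thm:main} is simply to set $a_k=\rho_k$, invoke \eqref{eq:insidenegative} to get the sign conditions, and then apply Lemmas~\ref{thm:LowerBoundZ} and~\ref{thm:LowerBoundG} together with the intermediate-value lemma; you have merely spelled out the bookkeeping the paper leaves implicit. The only caveat (shared with the paper, which flags it in the discussion immediately after the theorem) is that ``$\lambda^\dagger$ real'' permits $\rho_0+\rho_2=0$, in which case the strict inequality $a_0+a_2<0$ required by the lemmas fails, so strictly one needs $\lambda^\dagger>0$.
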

\begin{proof}
This follows from the previous three lemmas, letting $\rho_i=a_{i}$ and making note of \eqref{eq:insidenegative}. 
\end{proof}

Section \ref{sec:Classes} clarifies under what conditions $\lambda^\dagger$ is real, and Theorem \ref{thm:main} holds.
For example, when $\mathbf{p} = (1,1)^T$ then $\ab = (0,1)^T$ leads to a case (i) class, 
so Theorem~\ref{thm:realevs} holds, but the reality conditions in Theorem~\ref{thm:main} are 
not satisfied. It is also possible that $\lambda^\dagger=0$, so the real eigenvalue could be zero, 
so we also require that $\lambda^\dagger>0$. A sufficient condition on $\ab$ for this to hold is given in Lemma \ref{lem:RealitySatisfied}.

\subsection{Associated Eigenvector}
We must also consider whether this eigenvalue is ``valid'' in the sense that the corresponding eigenfunction in the full system is in fact a function in $L^2(\mathbb{T}^2)$,
the square integrable function on the torus. For this we need to show that the Fourier coefficients are in $\ell^2$.
In this section we are going to work with the infinite dimensional system. The reason for this is that although the truncations are  useful in the calculation 
of eigenvalues, it is simpler to analyse the corresponding eigenvectors in the full system.
Thus our approach is to use the finite dimensional Zeitlin truncation to compute the eigenvalues, then take the limit as $N \to \infty$,
and then study the decay of the corresponding eigenvector.

Consider the linearised matrix for the full system
\begin{equation}
\label{eq:inflin}
M=
\begin{pmatrix}
\ddots & \vdots & \vdots & \vdots  &\vdots & \ddots \\
\cdots & 0 & \rho_{0} & 0 & 0 & \cdots \\
\cdots & -\rho_{-1} & 0 & \rho_1 & 0 & \cdots \\
\cdots & 0 & -\rho_0 & 0 & \rho_2 & \cdots \\
\cdots & 0 & 0 & -\rho_1 & 0 & \cdots \\
\ddots & \vdots & \vdots & \vdots  &\vdots & \ddots 
\end{pmatrix}.
\end{equation}
This is the limit in some sense of the matrices \eqref{eq:MainMatrixG} and \eqref{eq:MainMatrixZ} 
which corresponds to the limit of the Hamiltonian systems $A=JS$ and $A'=J'S'$ with Hamiltonian 
$\mathcal{H}(\mathbf{\omega})=\sum_k \rho_k \omega_k^2$.

\begin{lemma}
\label{lem:eigenvector}
The infinite dimensional linearised system given by \eqref{eq:inflin} has a positive real eigenvalue $\lambda$ under the same conditions as Theorem \ref{thm:main}. 
Furthermore, the associated eigenvector is in $\ell^2$. 
\end{lemma}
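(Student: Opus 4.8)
The plan is to split the statement into two parts: existence of the real eigenvalue $\lambda$ for the infinite system $M$, and $\ell^2$-decay of the associated eigenvector. For the first part, I would first produce the eigenvalue in the finite Zeitlin truncation. Under the hypotheses of Theorem~\ref{thm:main}, for each $N$ large enough there is a real eigenvalue $\lambda_2(N) > \lambda^\dagger = \sqrt{-\rho_1(\rho_0+\rho_2)} > 0$ of \eqref{eq:MainMatrixZ}, and (using the leading-order behaviour $\mathcal{A}(x) = x^n + \dots$ together with Lemma~\ref{thm:LowerBoundZ}, as in the last lemma before the theorem) these eigenvalues lie in a bounded interval $[\lambda^\dagger, R]$ for an $N$-independent $R$; here $R$ can be taken from a crude bound on the $\rho_k$, since $\rho_k \to 1/|\mathbf{p}|^2$ and only finitely many are negative, so the spectral radius of $A'$ is uniformly bounded. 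Passing to a subsequence, $\lambda_2(N) \to \lambda \in [\lambda^\dagger, R]$. I would then verify that $\lambda$ is genuinely an eigenvalue of $M$ by constructing the eigenvector directly from the three-term recursion, rather than by a limiting/compactness argument on operators (which is delicate in infinite dimensions).

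For the eigenvector, the key observation is that $M v = \lambda v$ with $v = (\dots, v_{-1}, v_0, v_1, \dots)$ is, componentwise, the second-order linear recursion coming from row $k$ of \eqref{eq:inflin}:
\begin{equation}
\label{eq:evrecursion}
\lambda v_k = \rho_{k+1} v_{k+1} - \rho_{k-1} v_{k-1}.
\end{equation}
Since $\rho_k \to 1/|\mathbf{p}|^2 =: \rho_\infty > 0$ as $k \to \pm\infty$, for large $|k|$ this recursion is a perturbation of the constant-coefficient recursion $\lambda v_k = \rho_\infty(v_{k+1} - v_{k-1})$, whose characteristic equation $\rho_\infty \mu^2 - \lambda \mu - \rho_\infty = 0$ has two real roots $\mu_\pm$ with $\mu_+ \mu_- = -1$, hence one root of modulus $>1$ and one of modulus $<1$ (note $\lambda > 0$ so $|\mu_+|\ne|\mu_-|$). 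So there is a one-dimensional space of solutions of the limiting recursion decaying geometrically as $k \to +\infty$, and likewise as $k\to -\infty$. By a standard perturbation-of-recursions / Levinson-type argument (or directly: set up the recursion as a product of $2\times2$ transfer matrices $T_k \to T_\infty$, and use that $T_\infty$ is hyperbolic), the actual solution space splits the same way: there is, up to scaling, a unique solution $v^{+}$ of \eqref{eq:evrecursion} with $v^+_k \to 0$ geometrically as $k \to +\infty$, and a unique $v^{-}$ with $v^-_k \to 0$ geometrically as $k \to -\infty$. One then needs these two half-line decaying solutions to coincide (up to scalar) — this is exactly where $\lambda$ being an eigenvalue of the truncated problem is used. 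Concretely, the finite Zeitlin matrix encodes the recursion with the "wrap-around" boundary conditions, and $\lambda_2(N)$ being an eigenvalue for all large $N$ forces the decaying-at-$+\infty$ solution and the decaying-at-$-\infty$ solution to be proportional in the limit; alternatively one can take the limit of the normalized truncated eigenvectors and check it is nonzero and satisfies \eqref{eq:evrecursion} with the correct decay on both ends. Geometric decay on both tails gives $v \in \ell^2$ immediately.

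The main obstacle is the gluing step: showing that the solution decaying at $+\infty$ and the one decaying at $-\infty$ are the same line, i.e.\ that $\lambda$ actually is an eigenvalue of the bi-infinite $M$ and not merely an accumulation point of truncated eigenvalues with no bi-infinite $\ell^2$ eigenvector. I expect to handle this by carefully tracking the truncated eigenvectors: normalize $v(N)$ so that, say, $v_0(N) = 1$ (justifying $v_0(N)\ne 0$, or choosing another pivot index inside the disc), use the uniform hyperbolicity of the transfer matrices away from the disc to get $N$-uniform geometric bounds $|v_k(N)| \le C \theta^{|k|}$ for $|k|$ beyond the disc and below the truncation radius (with $\theta<1$, $C$ independent of $N$), and then extract a diagonal subsequential limit $v_k(N)\to v_k$; the uniform bound passes to the limit giving $v \in \ell^2$, and taking $N\to\infty$ in each fixed row of $A'v(N) = \lambda_2(N) v(N)$ gives $Mv = \lambda v$. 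A secondary technical point is ensuring the limit eigenvector is nonzero, which follows from the normalization $v_0 = 1$ surviving the limit. The decay rate $\theta$ can be taken as any number in $(|\mu_-|, 1)$ where $\mu_-$ is the stable root of the limiting characteristic equation, made uniform by choosing the disc radius large enough that $\|T_k - T_\infty\|$ is small for all relevant $k$.
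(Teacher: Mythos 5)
Your overall skeleton --- pass the lower-bounded truncated eigenvalues to the limit, read off the eigenvector equation as the three-term recursion $\lambda v_k=\rho_{k+1}v_{k+1}-\rho_{k-1}v_{k-1}$, observe that the limiting constant-coefficient recursion has characteristic roots with $\mu_1\mu_2=-1$ so that one root lies strictly inside and one strictly outside the unit circle, and conclude geometric decay --- matches the paper up to and including equation \eqref{eq:recurr2}. Where you genuinely diverge is in how the growing mode is eliminated. The paper does this with the Hamiltonian structure rather than with transfer matrices: for an eigenvector $\mathbf{v}$ of a real eigenvalue $\lambda\neq 0$ the conserved quadratic form $\mathcal{H}(\mathbf{v})=\sum_k\rho_k v_k^2$ must vanish (since $\mathcal{H}(e^{\lambda t}\mathbf{v})=\mathcal{H}(\mathbf{v})$ for all $t$), so $\sum_{k\neq 0}\rho_k v_k^2=-\rho_0 v_0^2$ is finite; if the coefficient $C_2$ of the growing solution were nonzero this sum would diverge because $\rho_k\to 1/|\mathbf{p}|^2>0$. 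That is a two-line argument once the asymptotic form $v_k\sim C_1\mu_1^k+C_2\mu_2^k$ is in hand, and it avoids the entire gluing problem you identify. Your route buys something the paper's does not: it actually constructs the bi-infinite eigenvector from the truncated ones, whereas the paper simply posits ``an eigenvector associated with this eigenvalue'' after letting $N\to\infty$; in that sense your treatment of existence is more careful.

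The weak point in your version is the asserted $N$-uniform bound $|v_k(N)|\le C\theta^{|k|}$. Uniform hyperbolicity of the transfer matrices away from the disc gives a splitting of the solution space into decaying and growing directions; it does not, by itself, say that the particular solution picked out by the truncated eigenvalue problem has small component along the growing direction. That component is controlled only by the boundary conditions of the finite matrix (Dirichlet-type truncation for \eqref{eq:MainMatrixG}, wrap-around for \eqref{eq:MainMatrixZ}): one must argue, e.g.\ by shooting inward from the truncation edge, that the condition at $k=m_2+1$ forces the unstable coefficient to be $O((\mu_1/\mu_2)^{m_2})$ relative to the stable one, uniformly in $N$. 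You flag this as ``the main obstacle'' but your proposed resolution conflates hyperbolicity with decay, so as written the gluing step is not closed. If you prefer not to carry out that boundary analysis, the conserved quadratic form $\sum_k\rho_k v_k^2=0$ is the cleaner mechanism and is exactly the device the paper uses.
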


\begin{proof}

According to Theorem \ref{thm:main}, there exist positive real eigenvalues with some lower bound (independent of $N$) of \eqref{eq:MainMatrixZ} (and \eqref{eq:MainMatrixG}) for any $N$ and either choice of truncation. 
By taking $N\to\infty$ we can conclude there exists some positive real eigenvalue $\lambda$ of \eqref{eq:inflin}.  

Now consider an eigenvector associated with this eigenvalue, $\mathbf{v}=(...,v_{-1},v_0,v_1,v_2,...)^T$. For this to correspond to a real $L^2$ eigenfunction of the full problem (that is, for the Fourier series to converge), we need these Fourier coefficients to decay sufficiently fast: they need to be a sequence in $\ell^2$.

The entries of the (infinite dimensional) eigenvector $v_k$ of \eqref{eq:inflin} corresponding to eigenvalues $\lambda$ 
satisfy the recursion relation
\begin{equation}
	\lambda v_k= \rho_{k+1}v_{k+1}-\rho_{k-1}v_{k-1}.
\end{equation}

Since all $\rho_k \not = 0$ this can be rewritten as
\begin{equation}
	\label{eq:recurr}
	v_{k+1}=\frac{\lambda}{\rho_{k+1}}v_k+\frac{\rho_{k-1}}{\rho_{k+1}}v_{k-1}.
\end{equation}

Consider the limiting behaviour as $k\to\infty$. then $\rho_k\to\frac{1}{|\pp|^2}$. 
In this limit solutions to \eqref{eq:recurr} behave like solutions to
\begin{equation}
	\label{eq:recurr2}
	v_{k+1}={\lambda}|\pp|^2v_k+v_{k-1},
\end{equation}
see, e.g., \cite{Henry81}.

This linear recurrence has the general solution
\begin{equation}
	v_k = C_1\mu_1^k+C_2\mu_2^k
\end{equation}
where $C_1, C_2\in\mathbb{R}$ are constants and $\mu_1, \mu_2$ are solutions to $\mu^2-\lambda|\pp|^2\mu-1=0$. Thus $\mu_1\mu_2=-1$ and without loss of generality $|\mu_1|<1$, $|\mu_2|>1$ (note that we cannot have $|\mu_1|=|\mu_2|=1$ as $\lambda|\pp|^2\neq0$). 

Now as $\mathbf{v}$ is an eigenvector associated with a real eigenvalue, the span of the eigenvector is an invariant subspace of 
the Hamiltonian system with Hamiltonian $\mathcal{H}(\mathbf{\omega})=\sum_k \rho_k \omega_k^2$.
In fact, let $\mathbf{\omega}(0)=\mathbf{v}$, then $\mathbf{\omega}(t)=e^{\lambda t}\mathbf{v}$. As the Hamiltonian is an integral of the motion, $\mathcal{H}(\mathbf{v})=\mathcal{H}(e^{\lambda t}\mathbf{v})$. By taking the limit $t\to-\infty$, $\mathcal{H}(\mathbf{v})=\mathcal{H}(0)=0$. 
	
Therefore, 
\begin{equation}
	\mathcal{H}(\mathbf{v})=\sum_{k}\rho_k v_k^2=0;
\end{equation}
\begin{equation}
	\sum_{k\neq 0}\rho_k v_k^2=-\rho_0v_0^2.
\end{equation}

Now, if $C_2\neq 0$, 
\begin{equation}
	\sum_{k\neq 0}\rho_k v_k^2 \sim\sum_{k\neq 0}\rho_k(C_2\mu_2^{k})^2 \to \infty,
\end{equation}
recalling that $\rho_k\to\frac{1}{|\pp|^2}$ and $\rho_k>0$ for all $k\neq 0$. But $|\rho_0|<1$ and $v_k$ is finite, so there is a contradiction. Thus $C_2=0$ and $v_k=C_1\mu_1^k$ in the asymptotic limit, where $|\mu_1|<1$. This is exponential decay, which is sufficient for the Fourier series to converge.

Similarly for $k\to-\infty$, the limiting behaviour is governed by 
\begin{equation}
	\label{eq:recurr3}
	v_{k-1}=-{\lambda}|\pp|^2v_k+v_{k-1}.
\end{equation}

Again, this means $v_k$ is asymptotic to $C_1\mu_1^k+C_2\mu_2^k$ for $|\mu_1|<1$, $|\mu_2|>1$. By the same argument as above, we can conclude that $C_1=0$ and so $v_k=C_2\mu_2^k$ as $k\to\infty$.
Thus the Fourier coefficients decay exponentially on both sides with $|k|$, and hence $\mathbf{v}$ is in $\ell^2$.
\end{proof}

\section{Instability of Equilibria}
\label{sec:Classes}

\begin{figure}
\centering
\begin{tikzpicture}[scale=1.165]
\foreach \x in {0,...,4}
	\foreach \y in {0,...,3}
		\draw [fill] (\x,\y) circle [radius=.01];
		
\draw[<->] (0-0.3,0-0.5+0.33) to (3+0.3-2*3/5,5+0.5-2+0.33);
\draw[<->] (2-0.3,0-0.5) to (5+0.3-2*3/5,5+0.5-2);
\draw[<->] (4-0.3,0-0.5+0.33) to (5+0.3-1,5/3+0.5-1*5/3+0.33);

\draw  [dashed] (2+3/5,1) to (1,2);
\node at (2.0,1.7) {\tiny $\frac{2N+1}{|\mathbf{p}|}$};

\draw (2+3/5+3/40,1+5/40) to (2+3/5+3/40-5/40,1+5/40+3/40);
\draw (2+3/5-5/40,1+3/40) to (2+3/5+3/40-5/40,1+5/40+3/40);

\draw [fill, color=red] (1,2) circle [radius=.06];
\node [above left] at (1,2) {\tiny $\mathbf{a}$};
\draw [fill] (2,0) circle [radius=.06];
\draw (-0.5,-0.8) rectangle (4.5,4);

\end{tikzpicture}
\begin{tikzpicture}[scale=0.7]
\foreach \x in {-1,...,6}
	\foreach \y in {0,...,7}
		\draw [fill] (\x,\y) circle [radius=.01];

\draw [<->] (1-0.2*4/8,0-0.2) to (1+7*4/8+0.2*4/8,7+0.2);

\draw [fill, color=black] (1,0) circle [radius=.06] node [above left] {\tiny $\widehat{\mathbf{a}+k\mathbf{p}}$};
\draw [fill, color=red] (2,2) circle [radius=.06] node [above left] {\tiny $\mathbf{a}$};
\draw [fill, color=black] (3,4) circle [radius=.06] node [above left] {\tiny $\widehat{\mathbf{a}+(k+1)\mathbf{p}}$};
\draw [fill, color=red] (4,6) circle [radius=.06] node [above left] {\tiny $\mathbf{a}+\mathbf{p}$};

\draw (-1.5,-0.5) rectangle (6.5,7.5);

\end{tikzpicture}
\caption{For fixed  $\mathbf{p}$ and $\mathbf{a}$, there are some concerns with the wrapping of the Zeitlin truncation and the way this affects the intersection $\Sigma_\ab \cap D_\pp$. These situations are discussed in Lemma \ref{lem:NSequence}. \newline\hspace{\textwidth}
 \emph{Left:} the shortest distance between $\mathbf{a}$ and some non-consecutive $\widehat{\mathbf{a}+k\mathbf{p}}$ is at least $\frac{2N+1}{|\mathbf{p}|}$. As we are interested in the limit $N\to \infty$, this distance can be made arbitrarily large, so that  $\frac{2N+1}{|\mathbf{p}|}>2|\mathbf{p}|$. This ensures that $\mathbf{a}$ and $\widehat{\mathbf{a}+k\mathbf{p}}$ cannot both be in the disc $D_\mathbf{p}$.\newline\hspace{\textwidth}
 \emph{Right:} The situation where there exists $k\in\mathds{R}$ such that $\widehat{\mathbf{a}+k\mathbf{p}}$ lies on the line segment between $\mathbf{a}$ and $\mathbf{a}+\mathbf{p}$. This causes problems for our values of $\rho$. For $\widehat{\mathbf{a}+k\mathbf{p}}$ to lie at a lattice point on the line segment, $\text{gcd}(p_1,p_2)>1$. If $\text{gcd} (p_1,p_2)$ is odd, we can avoid this situation by choosing $N$ per Equation \eqref{eq:NChoice}; if $p_1$ and $p_2$ are both even this situation is unavoidable.} 
\label{fig:Problems}
\end{figure}
 
 To prove instability of an equilibrium for a given $\pp$ we need to find at least 
 one unstable class $\Sigma_{\ab}/\Sigma'_{\ab}$.
 A necessary condition for  a lattice point $\ab$ 
to lead to an unstable class is to be inside the unstable disc. 
More precisely we desire a lattice point $\ab$ that leads to a class 
of case (i) as this is the simplest situation for us to deal with.
Theorem \ref{thm:main} asserts that there is a real eigenvalue with an explicit lower bound under some certain conditions. The goal now is to determine for which $\pp$ there exists a lattice point $\ab$ such that the conditions of Theorem \ref{thm:main} are satisfied.

There are a number of other considerations when we take Zeitlin's truncation. Although this preserves the geometric structure and 
the Casimirs of the original problem, it introduces a problem that is not present in \cite{Li00}. 
This has already been mentioned in the classification of classes; it is the appearance of 
case (iii), in which a class intersects the unstable disc at non-consecutive points.
This may occur because now we have periodic boundary conditions not only in 
physical space, but also in Fourier space. There are two distinct problems caused by this, 
as illustrated in figure~\ref{fig:Problems}.

The lattice points of a class lie on parallel line segments with direction vector $\pp$ in the domain of Fourier modes. 
In the Zeitlin truncation there is more than one such line segment in the domain.
The first problem appears when the distance between these line segments 
is so small that more than one line segment intersects the unstable disc.
This can be fixed by making $N$ sufficiently large.
The second problem occurs when non-consecutive lattice points lie on the same 
line segment intersecting the unstable disc. If $\text{gcd}(p_1,p_2)$ is not even 
this can be fixed by choosing $N$ per \eqref{eq:NChoice}. 
Note that for our purposes, $\mathrm{gcd}(p_1,0)=p_1$ and $\mathrm{gcd}(p_1,p_2)=\mathrm{gcd}(|p_1|,|p_2|)$.

\begin{lemma}[Correct choices of $N$ for the Zeitlin truncation]
 For all $\mathbf{p}= (p_1, p_2)^T$ such that $\kappa=\mathrm{gcd}(p_1,p_2)$ is not even, there exists a sequence of $N$ which increases without bound such that for all choices of $\mathbf{a}$ any two non-consecutive lattice points in $\Sigma'_\ab$ cannot both be in the unstable disc.
\label{lem:NSequence}
\end{lemma}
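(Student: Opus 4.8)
The plan is to choose the truncation parameter so that $\kappa=\gcd(p_1,p_2)$ divides $2N+1$ while $2N+1$ is much larger than $2|\mathbf{p}|^2$, and then to show by an elementary lattice computation that any two class points lying in $D_\mathbf{p}$ must be consecutive. The divisibility is the crucial point: it is exactly what rules out the second pathology of Figure~\ref{fig:Problems} (a class revisiting a line segment at the lattice points it would otherwise skip), and it can be arranged with $2N+1$ odd precisely because $\kappa$ is odd.

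Concretely, write $\mathbf{p}=\kappa\mathbf{q}$ with $\mathbf{q}\in\mathbb{Z}^2$ primitive, and take $2N+1=\kappa(2M+1)$ for integers $M\ge M_0$, where $M_0$ is chosen so that $\kappa(2M_0+1)>2|\mathbf{p}|^2$. Since $\kappa$ is odd, $\kappa(2M+1)$ is odd, so $N=\tfrac12\bigl(\kappa(2M+1)-1\bigr)$ is a nonnegative integer, and $N\to\infty$ as $M\to\infty$. For these $N$ we have $\gcd(2N+1,\kappa)=\kappa$, so by \eqref{eq:ClassSizeZ} the class size is $n=|\Sigma'_\mathbf{a}|=2M+1$, independent of $\mathbf{a}$; moreover $n$ is the order of $\mathbf{p}$ in $(\mathbb{Z}/(2N+1)\mathbb{Z})^2$, so $k\mapsto\widehat{\mathbf{a}+k\mathbf{p}}$ descends to a bijection $\mathbb{Z}/n\mathbb{Z}\to\Sigma'_\mathbf{a}$ and ``consecutive'' means adjacent indices modulo $n$.

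Now fix $\mathbf{a}$ and suppose $\widehat{\mathbf{a}+k\mathbf{p}}$ and $\widehat{\mathbf{a}+l\mathbf{p}}$ both lie in $D_\mathbf{p}$; I claim $l-k\equiv 0,\pm1\pmod n$, which is the lemma. Both points have norm $<|\mathbf{p}|$, so their difference $\mathbf{d}=\widehat{\mathbf{a}+k\mathbf{p}}-\widehat{\mathbf{a}+l\mathbf{p}}$ satisfies $|\mathbf{d}|<2|\mathbf{p}|$, and by definition of the hat map $\mathbf{d}=(k-l)\mathbf{p}-(2N+1)\mathbf{M}$ for some $\mathbf{M}\in\mathbb{Z}^2$. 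Projecting onto $\mathbf{p}^\perp=(-p_2,p_1)$ annihilates the $(k-l)\mathbf{p}$ term, so $|\mathbf{d}\cdot\mathbf{p}^\perp|=(2N+1)\,|\mathbf{M}\cdot\mathbf{p}^\perp|$. If $\mathbf{M}$ is not parallel to $\mathbf{p}$ then $\mathbf{M}\cdot\mathbf{p}^\perp$ is a nonzero multiple of $\kappa$, so $|\mathbf{d}|\ge|\mathbf{d}\cdot\mathbf{p}^\perp|/|\mathbf{p}|\ge(2N+1)\kappa/|\mathbf{p}|>2|\mathbf{p}|$, contradicting $|\mathbf{d}|<2|\mathbf{p}|$. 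Hence $\mathbf{M}=u\mathbf{q}$ for some $u\in\mathbb{Z}$, and using $\mathbf{p}=\kappa\mathbf{q}$ together with $2N+1=\kappa n$ we get $\mathbf{d}=\bigl((k-l)-nu\bigr)\mathbf{p}$; then $|\mathbf{d}|<2|\mathbf{p}|$ forces $(k-l)-nu\in\{-1,0,1\}$, i.e.\ $l-k\equiv0,\pm1\pmod n$. Thus $\widehat{\mathbf{a}+k\mathbf{p}}$ and $\widehat{\mathbf{a}+l\mathbf{p}}$ are equal or consecutive, as required.

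The geometric estimates above are routine; the work is really in the set-up. The main obstacle is recognizing and exploiting the right arithmetic normalization, $\kappa\mid 2N+1$: without it $\mathbf{d}$ need not be an integer multiple of $\mathbf{p}$, so the final norm bound cannot conclude consecutiveness, and indeed the phenomenon is unavoidable when $p_1,p_2$ are both even. The remaining care is purely bookkeeping, namely reading the hat operation as reduction modulo $(2N+1)\mathbb{Z}^2$ and keeping track of the primitive vector $\mathbf{q}$ throughout.
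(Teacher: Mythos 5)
Your proof is correct and follows essentially the same route as the paper: the same normalization $2N+1=\kappa(2M+1)$ with $2N+1>2|\mathbf{p}|^2$, the same dichotomy between class points on distinct parallel line segments (ruled out by the transversal estimate $\ge (2N+1)/|\mathbf{p}|>2|\mathbf{p}|$) and points on the same segment (where $\kappa\mid 2N+1$ forces the difference to be an integer multiple of $\mathbf{p}$, hence $\mathbf{0}$ or $\pm\mathbf{p}$). The only cosmetic difference is that you organize the case split via the projection onto $\mathbf{p}^\perp$ rather than the distance between parallel lines.
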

\begin{proof}
Let 
\begin{equation} 
  N=\frac{(2\tilde{N}+1)\kappa-1}{2}
  \label{eq:NChoice}
\end{equation}
for $A\in\mathds{N}$. Thus $2N+1=(2\tilde{N}+1)\kappa$. If $\kappa$ is not even, then such an $N$ is a positive integer and thus a valid grid size. Select as a lower bound $\tilde{N}>\frac{2|\mathbf{p}|^2-\kappa}{2\kappa}$ so that $N>\frac{2|\mathbf{p}|^2-1}{2}$. $\mathbf{p}$ is fixed and finite so this lower bound is always finite. We can thus find an infinite sequence of $N$ that increases without bound by letting $\tilde{N}$ increase without bound.

If $\mathbf{x}\in\Sigma_{\mathbf{a}}$ then $\mathbf{x}=\widehat{\mathbf{a}+k\mathbf{p}}$ for some $k\in\mathds{N}$. Thus $\mathbf{x}$ lies on the line parallel to the vector $\mathbf{p}$ that passes through the point $\ab+\mathbf{\Delta}_\mathbf{x} (2N+1)$ for some $\mathbf{\Delta}_\mathbf{x}\in\mathds{Z}^2$ (note that this point will be outside the domain $\mathcal{D}$). 

Similarly if $\mathbf{y}\in\Sigma_{\mathbf{a}}$ then it lies on the line parallel to $\pp$ that passes through $\ab+\mathbf{\Delta}_\mathbf{y} (2N+1)$ for some $\mathbf{\Delta}_\mathbf{y}\in\mathds{Z}^2$. Then the distance between these two lines is 
\begin{equation}
 d=\frac{|((\ab+\mathbf{\Delta}_\mathbf{x} (2N+1))-(\ab+\mathbf{\Delta}_\mathbf{y} (2N+1)))\times \mathbf{p}|}{|\mathbf{p}|}=\frac{(2N+1)|(\mathbf{\Delta}_\mathbf{x}-\mathbf{\Delta}_\mathbf{y})\times \mathbf{p}|}{|\mathbf{p}|}.
\end{equation}

$(\mathbf{\Delta}_\mathbf{x}-\mathbf{\Delta}_\mathbf{y})\times \mathbf{p}\in\mathds{Z}^2$, so $d=0$ or $d\geq\frac{2N+1}{|\pp|}$. If $d\geq\frac{2N+1}{|\pp|}$, this corresponds to $\mathbf{x}$ and $\mathbf{y}$ lying on different line segments, as in figure \ref{fig:Problems}. Thus the distance between two points on different line segments is at least $\frac{2N+1}{|\pp|}>2|\pp|$ for our choice of $N$.

If $d=0$, then $\mathbf{x}$ and $\mathbf{y}$ must lie on the same line segment. Thus $\mathbf{y}$ lies on the line parallel to $\pp$ passing through $\mathbf{x}$.

Write $\mathbf{p}=\kappa \mathbf{q}$ so $\mathbf{q}=(q_1,q_2)^T$ where $\text{gcd}(q_1,q_2)=1$. Then as $\mathbf{x},\mathbf{y}\in\mathds{Z}^2$,  $\mathbf{y}=\mathbf{x}+k\mathbf{q}$ for some $k\in\mathds{Z}$. But  $\mathbf{x},\mathbf{y}\in\Sigma_{\mathbf{a}}$ so $\mathbf{y}=\mathbf{x}+j\mathbf{p}+(2N+1)\mathbf{\Delta}$ for some $\mathbf{\Delta}\in\mathds{Z}^2$.

Thus $k\mathbf{q}=j\mathbf{p}+(2N+1)\mathbf{\Delta}$. So
\begin{equation}k\mathbf{q}=j\kappa\mathbf{q}+(2\tilde{N}+1)\kappa\mathbf{\Delta}=\kappa(j\mathbf{q}+(2\tilde{N}+1)\mathbf{\Delta}).\end{equation}
Thus $k\mathbf{q}$ is divisible by $\kappa$, but the elements of $\mathbf{q}$ both be divided by $\kappa$ by definition, so $\kappa | k$. Thus $\mathbf{y}=\mathbf{x}+\kappa\beta\mathbf{q}$ for some $\beta\in\mathds{Z}$, and so $\mathbf{y}=\mathbf{x}+\beta\pp$. Then if $|\mathbf{y}-\mathbf{x}|<2|\mathbf{p}|$ (the necessary condition for $\mathbf{x},\mathbf{y}$ both in $D_\pp$) this implies $\beta=0$ or $\beta=\pm 1$, and the result follows. \end{proof}

\begin{figure}
\centering
\begin{tikzpicture}

\path [pattern=north west lines] (6.95,7.05) to [out=125,in=-14] (4.84,8.51) to [out=52,in=-169] (6.43,9.57) to [out=-95,in=118] (6.95,7.05);

\path [pattern=north west lines] (7.05,6.95) to [out=125+180,in=-14+180] (9.16,5.49) to [out=52+180,in=-169+180] (7.57,4.43) to [out=-95+180,in=118+180] (7.05,6.95);

\draw  (7,7) circle [radius=sqrt(13)];

\draw [black,domain=-40:110] plot ({sqrt(13)*cos(\x)+4}, {sqrt(13)*sin(\x)+5});

\draw [black,domain=-40+180:110+180] plot ({sqrt(13)*cos(\x)+10}, {sqrt(13)*sin(\x)+9});

\draw [blue,dashed] (7,7) circle [radius=(sqrt(3)-1)*sqrt(13)];
\draw [->] (7,7) -- (10,9);

\draw [-] (7,7) -- (8.46410161514,9.19615242271) node [midway, above, sloped] (TextNode) {\tiny $(\sqrt{3}-1)|\mathbf{p}|$};

\draw [fill] (4,5) circle [radius=.04];
\draw [fill] (10,9) circle [radius=.04];

\draw [white,  fill] (5.85,8.73) circle [radius=0.56];
\draw [white, fill] (14-5.85,14-8.73) circle [radius=0.56];

\draw [red, very thick, pattern=north east lines, pattern color=red] (5.85,8.73) circle [radius=0.56];
\draw [red, very thick, pattern=north east lines, pattern color=red] (14-5.85,14-8.73) circle [radius=0.56];

\node [above right] at (10,9) {\tiny $\mathbf{p}$};

\node [right] at (7,7) {{\tiny $\;(0,0)$}};

\end{tikzpicture}

\caption{If $\mathbf{a}$ is inside the dashed
 blue circle centred at the origin, then $\rho_0+\rho_2<0$ and $\rho_0+\rho_{n-2}<0$. This circle has centre $(0,0)$ and radius $(\sqrt{3}-1)|\mathbf{p}|$. The shaded region here shows the overlap of this condition and the shaded region in figure \ref{fig:classes}. To show that there is at least one lattice point in the shaded regions we show the disc $D_c$ inscribed in this region (indicated by the small shaded red circles) has radius larger than $\sfrac{1}{\sqrt{2}}$.}

\label{fig:Conditions2}
\end{figure}
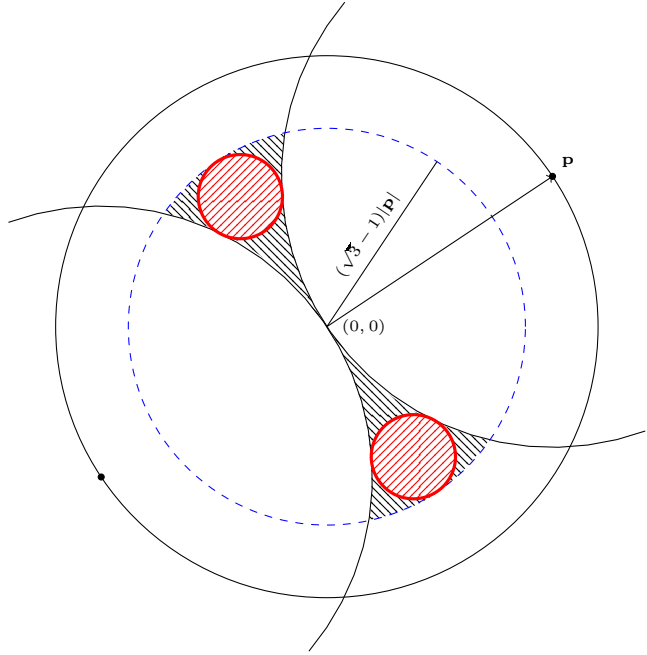

The outstanding issue with Zeitlin's truncation is that there is no appropriate choice of $N$ when $\gcd( p_1, p_2)$ is even. 
If $\kappa>1$ and $\gcd(\kappa,2N+1)=1$, then $\mathbf{p}$ will generate all multiples of $\mathbf{q}$ due to the wrapping operation. Thus classes that intersect the disc can return after leaving the disc and intersect the disc again, breaking the assumption of \ref{thm:LowerBoundZ} that $\rho_k<0$ for only one value of $k$. This behaviour continues for all values of $N$ with $\gcd(\kappa,2N+1)=1$. If $\kappa$ is even, this is true for any $N$; if $\kappa$ is odd, we select appropriate $N$ to avoid this.

It is important to note that this is not an error per se, but merely a failure of Lemma \ref{lem:NSequence} for our proof. The wrapping of the Zeitlin truncation associates modes in an artificial way, but still generates correct results. For instance if $\mathbf{p}=(6,2)^T=2(3,1)^T$, the class led by $\mathbf{a}=(1,6)^T$ intersects the unstable disc again at $(-2,5)^T=(1,6)^T-(3,1)^T$ for any finite truncation size. If we compare the non-imaginary eigenvalues of the class led by $\mathbf{a}=(1,6)^T$ with those of the Galerkin-truncated systems for $\mathbf{a}=(1,6)^T$ and $\mathbf{a}=(-2,5)^T$, the same eigenvalues are generated, with similar convergence to Figure \ref{fig:Convergence}.

Fortunately, this problem does not arise with the Galerkin truncation as the wrapping operation is omitted, and so the proof goes through without issue. For both truncations it is still necessary to establish whether for a 
given $\mathbf{p}$ there is an $\mathbf{a}$ such that the conditions of Theorem~\ref{thm:main} are met. We address that now.
\begin{lemma}
For all $\pp=(p_1,p_2)^T$ except $(1,0)^T, (1,1)^T, (1,2)^T$ (and permutations and sign changes thereof) there exists a choice of $\ab$ such that 
the reality conditions of Theorem \ref{thm:main} are satisfied for an appropriate choice of $N$ in the Galerkin truncation. Furthermore, if $\kappa =\mathrm{gcd}(p_1,p_2)$ is not even there is also a choice of $\ab$ such that the conditions of Theorem \ref{thm:main} are satisfied for the Zeitlin truncation.
\label{lem:RealitySatisfied}
\end{lemma}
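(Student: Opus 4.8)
The plan is to produce, for each admissible $\pp$, an explicit lattice point $\ab$ inside the unstable disc such that (a) $\ab$ leads to a case (i) class (i.e. $\ab$ is the \emph{only} intersection of $\Sigma_\ab$ with $D_\pp$), and (b) the reality condition $\lambda^\dagger = \sqrt{-\rho_1(\rho_0+\rho_2)}>0$ (or the analogous condition with $\rho_{n-1},\rho_{n-2}$) holds. By Theorem~\ref{thm:main} this gives the desired real eigenvalue with an $N$-independent lower bound. The key geometric observation, which I would record first as in Figure~\ref{fig:Conditions2}, is that $\rho_0+\rho_2<0$ (and $\rho_0+\rho_{n-2}<0$) is guaranteed whenever $\ab$ lies strictly inside the circle of radius $(\sqrt3-1)|\pp|$ centred at the origin: indeed $\rho_0+\rho_2 = \tfrac{2}{|\pp|^2} - \tfrac{1}{|\ab|^2} - \tfrac{1}{|\ab+\pp|^2}$, and since $\ab$ and $\ab+\pp$ are the two intersections and all other $\rho_k$ are positive, bounding $|\ab|, |\ab+\pp| < \sqrt{3}|\pp|$ suffices; a short computation shows $|\ab| < (\sqrt3 - 1)|\pp|$ forces $|\ab+\pp| \le |\ab| + |\pp| < \sqrt3|\pp|$. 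So the target region for $\ab$ is the intersection of (i) the ``shaded area'' of Figure~\ref{fig:classes} (the half-plane-type strip that makes $\ab$ the unique intersection, i.e. $\ab-\pp$ and $\ab+\pp$ both outside $\bar D_\pp$), with (ii) the open disc of radius $(\sqrt3-1)|\pp|$ about the origin, minus the line through the origin in direction $\pp$ (on which $\alpha=0$).

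Next I would show this region contains a lattice point for all $\pp$ with $|\pp|$ not too small. The standard device is to inscribe a Euclidean disc $D_c$ inside the region and check its radius exceeds $\tfrac{1}{\sqrt2}$, the covering radius of $\mathds Z^2$; any disc of radius $>\tfrac1{\sqrt2}$ contains a lattice point. From Figure~\ref{fig:Conditions2} the relevant inscribed disc sits near the boundary arc of $D_c$'s region; its radius grows linearly in $|\pp|$, so the condition radius $>\tfrac1{\sqrt2}$ holds for all $|\pp|$ above an explicit, small threshold. This disposes of all but finitely many $\pp$ (up to the symmetries $p_1\leftrightarrow p_2$, $p_i\mapsto -p_i$), after which one checks the remaining small cases — $(1,0)$, $(1,1)$, $(1,2)$, $(2,0)$, $(2,1)$, $(2,2)$, $(3,0)$, etc. — by hand, exhibiting an explicit $\ab$ where possible and verifying that $(1,0)$, $(1,1)$, $(1,2)$ genuinely fail the reality condition (for these, the only case-(i) points are too far from the origin, e.g. $\ab=(0,1)$ for $\pp=(1,1)$ as already noted after Theorem~\ref{thm:main}, so $\rho_0+\rho_2\ge 0$). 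One must also confirm the chosen $\ab$ is not parallel to $\pp$, so that $\alpha\neq 0$ and the class actually contributes; staying off the central line handles this.

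For the Zeitlin half of the statement, the point is that Lemma~\ref{lem:NSequence} already guarantees, when $\kappa=\gcd(p_1,p_2)$ is not even, a sequence $N\to\infty$ for which no two non-consecutive lattice points of $\Sigma'_\ab$ lie in $D_\pp$ — i.e. case (iii) is excluded and $\Sigma'_\ab$ behaves exactly like $\Sigma_\ab$ near the disc. So the same $\ab$ found in the Galerkin argument works, provided we also check it lies in a canonical fundamental rectangle for $\Sigma'_\ab$; since $|\ab| < (\sqrt3-1)|\pp| < |\pp|$ this is automatic for $N$ large. Then Theorem~\ref{thm:main} applies verbatim to \eqref{eq:MainMatrixZ}.

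The main obstacle I expect is the bookkeeping in the geometric lemma: precisely describing the ``shaded region'' of Figure~\ref{fig:classes} as inequalities on $\ab$, intersecting it with the $(\sqrt3-1)|\pp|$-disc, honestly bounding the in-radius of the resulting region from below by a linear function of $|\pp|$, and then being careful with the finitely many leftover small $\pp$ — in particular verifying that $(1,0),(1,1),(1,2)$ are the \emph{only} exceptions and that none of the borderline cases (e.g. $(2,1)$, $(2,2)$, $(3,1)$) secretly fails. Everything else is routine once that region is pinned down.
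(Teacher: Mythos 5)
Your proposal follows essentially the same route as the paper: the reality conditions are reduced to $\rho_0<0$, $\rho_{\pm1}>0$, $\rho_0+\rho_2<0$; the last is guaranteed by $|\ab|<(\sqrt3-1)|\pp|$; a disc $D_c$ inscribed in the intersection of that disc with the ``shaded region'' has radius growing linearly in $|\pp|$ (the paper computes centre $\pm\tfrac{1}{\sqrt3}(-p_2,p_1)^T$ and radius $(\tfrac{2}{\sqrt3}-1)|\pp|$), so the covering-radius criterion radius $>1/\sqrt2$ settles all $|\pp|>\sqrt3/(\sqrt2(2-\sqrt3))\approx 4.57$, with a finite table for the rest; and the Zeitlin half is delegated to Lemma~\ref{lem:NSequence} exactly as you say.

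One step of your sketch is wrong as written, though the conclusion you draw from it is correct. You write $\rho_0+\rho_2=\tfrac{2}{|\pp|^2}-\tfrac{1}{|\ab|^2}-\tfrac{1}{|\ab+\pp|^2}$ and claim it suffices to bound $|\ab|,|\ab+\pp|<\sqrt3|\pp|$. First, $\rho_2$ is attached to the lattice point $\ab+2\pp$, not $\ab+\pp$ (and in case (i) $\ab$ is the \emph{only} intersection with the disc, so ``$\ab$ and $\ab+\pp$ are the two intersections'' is not the situation you are in). Second, the symmetric bound does not close: if both distances were merely $<\sqrt3|\pp|$ you would only get $\rho_0+\rho_2<\tfrac{2}{|\pp|^2}-\tfrac{2}{3|\pp|^2}>0$, which proves nothing. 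The correct computation is asymmetric: $|\ab|<(\sqrt3-1)|\pp|$ gives $|\ab+2\pp|\le|\ab|+2|\pp|<(\sqrt3+1)|\pp|$, and then
\begin{equation*}
\rho_0+\rho_2<\frac{2}{|\pp|^2}-\frac{1}{(\sqrt3-1)^2|\pp|^2}-\frac{1}{(\sqrt3+1)^2|\pp|^2}
=\frac{1}{|\pp|^2}\left(2-\frac{8}{16-12}\right)=0,
\end{equation*}
which is the identity the threshold $(\sqrt3-1)|\pp|$ is engineered to produce. With that repair the rest of your outline (including the remark that $D_c$ avoids the line through the origin parallel to $\pp$, so $\alpha\neq0$) goes through as in the paper.
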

\begin{proof}
For the bounds given in Theorems \ref{thm:LowerBoundG} and \ref{thm:LowerBoundZ}  to be real, positive, and hence a valid bound, we require $\rho_{0}<0$, $\rho_k\geq 0$ for all $k\neq 0$, and $\rho_0+\rho_2<0$ (or $\rho_0+\rho_{n-2}<0$). 



If $\rho_0<0,\;\rho_1,\rho_{-1}>0$, then $|\ab|<|\pp|$ and $|\ab\pm\pp|>|\pp|$. This is true if and only if $\ab$ is in the shaded region figure \ref{fig:classes}. 
In the Galerkin truncation, this is sufficient to show that $\rho_k>0$ for all $k\neq 0$.
By Lemma \ref{lem:NSequence}, in the Zeitlin truncation we can find an unbounded sequence of choices of $N$ such that this is sufficient to show $k\neq 0,\;\;\rho_k>0$. Thus for an appropriate choice of $N$ we only need to prove that there exists an $\ab$ such that $|\ab\pm\pp|>|\pp|$.

As $\lambda^\dagger=\sqrt{-\rho_1(\rho_0+\rho_2)}$ (or equivalently $\lambda^\dagger=\sqrt{-\rho_{-1}(\rho_0+\rho_{-2})}$) is required to be real and non-zero, and $\rho_{-1}>0$, then $\rho_0+\rho_{2}<0$ (equivalently $\rho_0+\rho_{-2}<0$).

If  $|\ab |<(\sqrt{3}-1)|\pp|$,  then $|\ab\pm 2\pp |\leq |\ab|+2|\pp| <(\sqrt{3}+1)|\pp| $. So 
\begin{align}
	\rho_0+\rho_{\pm 2}& < \frac{2}{|\pp|^2}-\frac{1}{|(\sqrt{3}-1)\pp|^2}-\frac{1}{|(\sqrt{3}+1)\pp|^2} \\
			&=0 \notag. \label{eq:Rho0Rho2}
\end{align}

We thus need to show there exists some lattice point $\mathbf{a}$ such that $|\ab |<(\sqrt{3}-1)|\pp|$  and $|\ab\pm\pp|>|\pp|$. These two conditions are illustrated in figure \ref{fig:Conditions2} in the shaded region. Note that this is sufficient but not necessary for Theorem \ref{thm:main} to hold.

The idea now is to specify that $\ab$ is in the disc inscribed by the shaded region in figure \ref{fig:Conditions2}, which we call $D_c$. This disc is tangent to the circles with radii $|\pp|$ and centres $\pm \pp$ and the circle centred at the origin with radius $(\sqrt{3}-1)|\pp|$. It is a simple geometric exercise to show that such a circle has centre $\pm \frac{1}{\sqrt{3}}\begin{pmatrix} -p_2 \\ p_1 \end{pmatrix}$ and radius $\left ( \frac{2}{\sqrt{3}}-1 \right ) |\pp|$. 

If $\ab\in D_c$, it is outside the circle with centre $-\pp$ and radius $|\pp|$. Thus $\ab+\pp$ is outside $D_\pp$ and $\rho_1>0$. Similarly, $\rho_{-1}>0$. As $D_c$ is inside the disc with centre origin and radius $\left ( \frac{2}{\sqrt{3}}-1 \right ) |\pp|$ clearly $\rho_0<0$ and by the above $\rho_0+\rho_{\pm 2}<0$. If we take a Zeitlin truncation, choose appropriate $N$ such that  $\rho_k \geq 0$ for all $k\neq 0$ by Lemma 4. Then the conditions of Theorem \ref{thm:main} are satisfied and the resulting bound $\lambda^\dagger$ is real and positive.

All that remains is to show that there exists an integer lattice point $\ab\in D_c$. Any disc with a radius greater than $\frac{1}{\sqrt{2}}$ must contain some integer lattice point (as it wholly contains a square of side length $1$). Thus if
\begin{equation}
\label{eq:pcondition}
	|\mathbf{p}|>\frac{\sqrt{3}}{\sqrt{2}(2-\sqrt{3})}\approx 4.57.
\end{equation}
then the $D_c$ has radius greater than $\sqrt{2}$ and such a lattice point exists.
Note that this is a sufficient but not necessary condition on $\mathbf{p}$ . 

Checking the small number of $\mathbf{p}$ values with $|\mathbf{p}|<4.57$ and $\kappa $ odd there are appropriate lattice points $\ab$ for most such $\pp$. The following table shows an appropriate value for $\ab$ for most such $\pp$, and ``None'' where no such $\mathbf{a}$ exists.
\renewcommand\arraystretch{1.2}
\begin{equation} 
\begin{array}{|c | c | c | c | c | c | c | }
\hline 
\pp & (4,1)^T & (3,3)^T & (3,2)^T
	& (3,1)^T & (3,0)^T &
	(2,2)^T  \\
\hline 
\ab & (1,-2)^T & (1,-1)^T & (1,-2)^T
	& (1,-2)^T & (0,2)^T &
	(-1,2)^T   \\ 
\hline
\end{array}
\end{equation}

For reflections/rotations of these values of $\pp$ the corresponding reflection/rotation of $\ab$ is an appropriate choice.

Thus for all $\mathbf{p}=(p_1,p_2)^T$ such that $\kappa =\mathrm{gcd}(p_1,p_2)$ except
except $(1,0)^T, (1,1)^T, (1,2)^T$ and reflections and rotations of these, there is a choice of $\ab$ so that the conditions of Theorem \ref{thm:main} is satisfied when ${N=\frac{(2\tilde{N}+1)\kappa -1}{2}}$ for any $\tilde{N}>\frac{2|\mathbf{p}|^2-\kappa }{2\kappa }$.
\end{proof}

Now we can combine the results about eigenvalues and eigenvectors from the last section and the conditions 
on $\mathbf{p}$ when they are applicable in our main
\begin{theorem} \label{thm:fin}
The steady state $\Omega^*=\alpha \cos (\mathbf{p} \cdot  \mathbf{x} )+\beta \sin (\mathbf{p} \cdot  \mathbf{x} )$ is nonlinearly unstable for all $\mathbf{p}=(p_1,p_2)^T$ except $\mathbf{p}=\begin{pmatrix}\pm 1, 0\end{pmatrix}^T$, $\mathbf{p}=\begin{pmatrix} 0, \pm 1\end{pmatrix}^T$
and possibly $\mathbf{p}=\begin{pmatrix}\pm 1, \pm 1\end{pmatrix}^T$, $\mathbf{p}=\begin{pmatrix}\pm 2, \pm 1\end{pmatrix}^T$, $\mathbf{p}=\begin{pmatrix}\pm 1, \pm 2\end{pmatrix}^T$.
\end{theorem}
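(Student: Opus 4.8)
The plan is to assemble Theorem~\ref{thm:fin} from the machinery already developed, so the proof is essentially a bookkeeping argument combining three ingredients: (1) the existence of a genuine positive real eigenvalue of the linearised infinite-dimensional operator, (2) the fact that its eigenvector lies in $\ell^2$ and hence corresponds to a bona fide $L^2$ eigenfunction, and (3) an abstract nonlinear instability criterion from the literature (\cite{Friedlander97}, \cite{Li04}, \cite{shvidkoy03}) that upgrades spectral instability of the linearisation to nonlinear instability of the steady state. First I would fix $\mathbf{p}=(p_1,p_2)^T$ not among the excluded or undecided cases, and by Lemma~\ref{lem:RealitySatisfied} choose a lattice point $\mathbf{a}\in D_{\mathbf{p}}$ realising case (i) together with the reality condition $\lambda^\dagger>0$; here one uses the Galerkin truncation when $\kappa=\gcd(p_1,p_2)$ is even (so that Lemma~\ref{lem:NSequence} is not needed) and either truncation when $\kappa$ is odd. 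By Theorem~\ref{thm:main} the corresponding class matrix has a real eigenvalue $\lambda\ge\lambda^\dagger>0$, uniformly in the truncation size $N$, and by Lemma~\ref{lem:eigenvector} the infinite-dimensional system \eqref{eq:inflin} inherits a positive real eigenvalue $\lambda$ with eigenvector in $\ell^2$.

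Next I would explain the passage from the single decoupled class back to the full linearised Euler operator. The block-diagonalisation of Section~\ref{sec:Decoupling} shows that the spectrum of the linearisation about $\Omega^*$ on $L^2(\mathbb{T}^2)$ contains the spectrum of each class $\Sigma_{\mathbf{a}}$; since the eigenvector we produced decays exponentially in $|k|$ (shown in the proof of Lemma~\ref{lem:eigenvector}) and is supported on the single line $\{\mathbf{a}+k\mathbf{p}\}$, padding it with zeros on all other Fourier modes gives an honest $\ell^2$ eigenvector of the full operator with eigenvalue $\lambda>0$. Thus the linearisation of the 2D Euler equation about $\Omega^*$ has an eigenvalue with strictly positive real part and an $L^2$ eigenfunction; this is precisely the spectral hypothesis required by the nonlinear instability theorems cited (in the form used in \cite{Friedlander97} and sharpened in \cite{shvidkoy03}, with the essential-spectrum control from \cite{Li04}). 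Invoking that result yields nonlinear (Lyapunov) instability of $\Omega^*$ in the relevant norm.

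Finally I would address the exceptional list. The cases $\mathbf{p}=(\pm1,0)^T$ and $(0,\pm1)^T$ (after using the symmetries $x\leftrightarrow-x$, $y\leftrightarrow-y$, $x\leftrightarrow y$ reducing to $\mathbf{p}=(1,0)^T$) are genuinely stable: this is Arnol'd's Energy--Casimir result, recovered here because $D_{\mathbf{p}}$ contains only $\mathbf{0}$ (whose class has $\alpha=\alpha'=0$), so every class is stable by the stable-disc argument around \eqref{eqn:stablecond}. The cases $\mathbf{p}=(\pm1,\pm1)^T$, $(\pm2,\pm1)^T$, $(\pm1,\pm2)^T$ are exactly the $\mathbf{p}$ for which Lemma~\ref{lem:RealitySatisfied} fails to produce a case-(i) class meeting the reality condition $\lambda^\dagger>0$ (the table there, together with $|\mathbf{p}|<4.57$ and the $(1,1),(1,2),(1,0)$ entries flagged ``None''), so our method is silent on them and they are listed as ``possibly'' unstable. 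I would state this explicitly so the reader sees the exclusion is a limitation of the technique, not a claimed stability result.

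The main obstacle is ingredient (3): one must cite the nonlinear instability theorem in exactly the hypothesis form our construction supplies — namely that a single discrete eigenvalue strictly to the right of the essential spectrum (whose location is described in \cite{Li04}) with an $L^2$ eigenfunction suffices — and verify that the eigenvalue $\lambda\ge\lambda^\dagger>0$ we produce indeed lies to the right of that essential spectrum rather than being buried in it. The bound $\lambda^\dagger=\sqrt{-\rho_1(\rho_0+\rho_2)}$ being independent of $N$ is what makes this possible, but the write-up must be careful to point to the precise statement in \cite{shvidkoy03}/\cite{Friedlander97} whose spectral gap condition is met here.
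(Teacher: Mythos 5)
Your proposal is correct and follows essentially the same route as the paper's own proof: Lemma~\ref{lem:RealitySatisfied} to locate a case-(i) class with $\lambda^\dagger>0$, Theorem~\ref{thm:main} for the $N$-independent real eigenvalue, Lemma~\ref{lem:eigenvector} for the $\ell^2$ eigenvector, and then the spectral-gap nonlinear instability criterion of \cite{Friedlander97} combined with the essential-spectrum description of \cite{shvidkoy03} (the paper additionally cites the spectral mapping theorem of \cite{Li04} at the linear-instability step, and notes that the eigenvectors for $\mathbf{a}$ and $-\mathbf{a}$ must be combined to produce a \emph{real} eigenfunction, a detail worth adding to your padding-with-zeros step). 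Your identification of the spectral gap as the crux of ingredient (3), enabled precisely by the $N$-independence of $\lambda^\dagger$, is exactly the paper's closing argument.
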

\begin{proof}
By Lemma \ref{lem:RealitySatisfied}, for all $\mathbf{p}$ except those listed above there exists some $\mathbf{a}$ such that $\rho_0<0$ and $\rho_0+\rho_2<0$ (or $\rho_0+\rho_{n-2}<0$) for an appropriate choice of $N$. Thus by Theorem \ref{thm:main} there exists a real positive eigenvalue $\lambda$. Moreover, the eigenvalue is greater than $\sqrt{-\rho_1(\rho_0+\rho_2)}$ (or $\sqrt{-\rho_{n-1}(\rho_0+\rho_{n-2})}$) which is both positive and independent of the choice of truncation size $N$. The truncation size $N$ can be increased without bound, by Lemma \ref{lem:NSequence}. Hence there is a hyperbolic eigenvalue in the limit $N\to\infty$ and the spectrum of the PDE is unstable. Now recall that any steady state $\Omega^*=\alpha \cos (\mathbf{p} \cdot  \mathbf{x} )+2\beta \sin (\mathbf{p} \cdot  \mathbf{x} )$ can be rewritten as $\Omega^*=2\Gamma\cos(\mathbf{p}\cdot\tilde{\mathbf{x}})$ and so the full result follows.


By Lemma \ref{lem:eigenvector}, the eigenvector associated with the eigenvalue $\lambda$ is in $\ell^2$.
The classes led by $\mathbf{a}$ and $-\mathbf{a}$ have the same eigenvalue, and the 
the corresponding eigenvectors can be combined to construct coefficients $\omega_{\mathbf{k}}$ 
of a real eigenfunction $\Omega_{\lambda}$ corresponding to $\lambda$. Since 
the eigenvectors $\mathbf{v}$ are in $\ell^2$ the periodic function $\Omega_\lambda$ is in $L^2$.
Together with the result in \cite{Li04}, which shows that the spectral mapping theorem holds, establishes linear instability.
To conclude nonlinear instability we refer to the work of \cite{Friedlander97} and \cite{shvidkoy03}. 
In \cite{Friedlander97} it was shown that sufficient conditions for nonlinear instability are linear instability together with a `spectral gap' condition. 
In \cite{shvidkoy03} it was shown that the essential spectrum of the linearised Euler operator in the cases we are considering is $i \mathds{R}$. 
Because of the presence of a point of discrete spectrum bounded away from the imaginary axis, we have a spectral gap, and hence nonlinear instability. 
 
\end{proof}

Note that this does {\em not} preclude the possibility that the values of $\mathbf{p}$ listed as exceptions do not also lead to a linearly unstable steady state $\Omega^*$. 
%
In fact, for $\mathbf{p}=\begin{pmatrix}1, 1\end{pmatrix}^T,\begin{pmatrix}2, 1\end{pmatrix}^T$ and reflections/rotations thereof numerical results find non-zero non-imaginary eigenvalues. For $\mathbf{p}=\begin{pmatrix}1, 1\end{pmatrix}^T$ there is one complex quadruplet of eigenvalues, $\pm0.24822\pm 0.35172i$ to five decimal places, calculated with $N=1500$ and $\Gamma=1$.  For $\mathbf{p}=\begin{pmatrix}2, 1\end{pmatrix}^T$ there are two real pairs and two complex quadruplets of eigenvalues.

Theorem~\ref{thm:fin} together with the numerical results mentioned and the spectral mapping theorem shown in [8] indicate that the only linearly stable 
equilibrium of type \eqref{eq:mainequilibria} is 
$\mathbf{p}=\begin{pmatrix}\pm 1, \pm 0\end{pmatrix}^T,\begin{pmatrix}\pm 0, \pm 1\end{pmatrix}^T$,
because only these have the single lattice point $\ab = (0,0)^T$ inside the unstable disc.
This exceptional case leads only to zero eigenvalues. 
All lattice points outside the unstable disc (including those on the boundary) do not contribute to instability
(see section 3.1) and hence this equilibrium is spectrally stable. By \cite{Kolev07} it is then linearly stable, and by \cite{Arnold98} and \cite{Li00} it is also Lyapunov stable.


\section{Some Numerical Results}
\label{sec:numerical}

\begin{figure}
\centering
\includegraphics[width=0.7\textwidth]{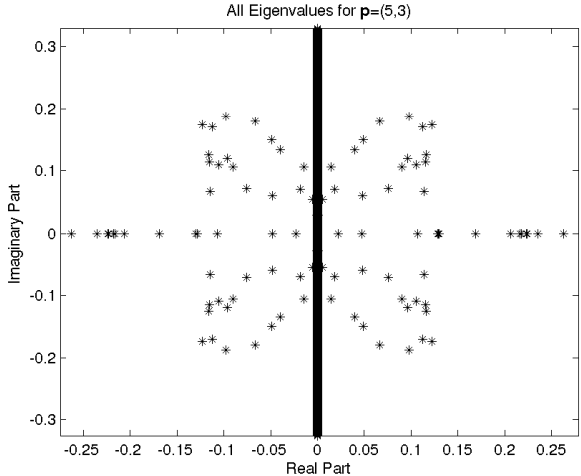}
\caption{All eigenvalues for the case $\mathbf{p}=(5,3)^T$, $\Gamma=\frac{1}{2}$. The Zeitlin truncation is used with  $N=200$. Note there are $200$ (or $100$ plus-minus pairs) eigenvalues with non-zero real part, and there are $100$ interior lattice points in $D_\mathbf{p}\setminus \{\mathbf{0}\}$. This confirms the result from \cite{Li04}. Of the non-imaginary eigenvalues, $56$ are real and $144$ are complex.  The number of interior points that satisfy $\rho_0<0$, $\rho_1,\rho_{-1}>0$ is $24$. All these points correspond to real pairs (two sets of $24=48$). The other real pairs come from interior points with $\rho_0<0$ and $\rho_1<0$ or $\rho_{-1}<0$. This usually creates a complex quadruplet but in a few cases corresponds to two real pairs instead. Increasing $N$ does not change the number of non-imaginary eigenvalues.} 
\label{fig:53Ensemble}
\end{figure}

\begin{figure}
\centering\includegraphics[width=0.49\textwidth]{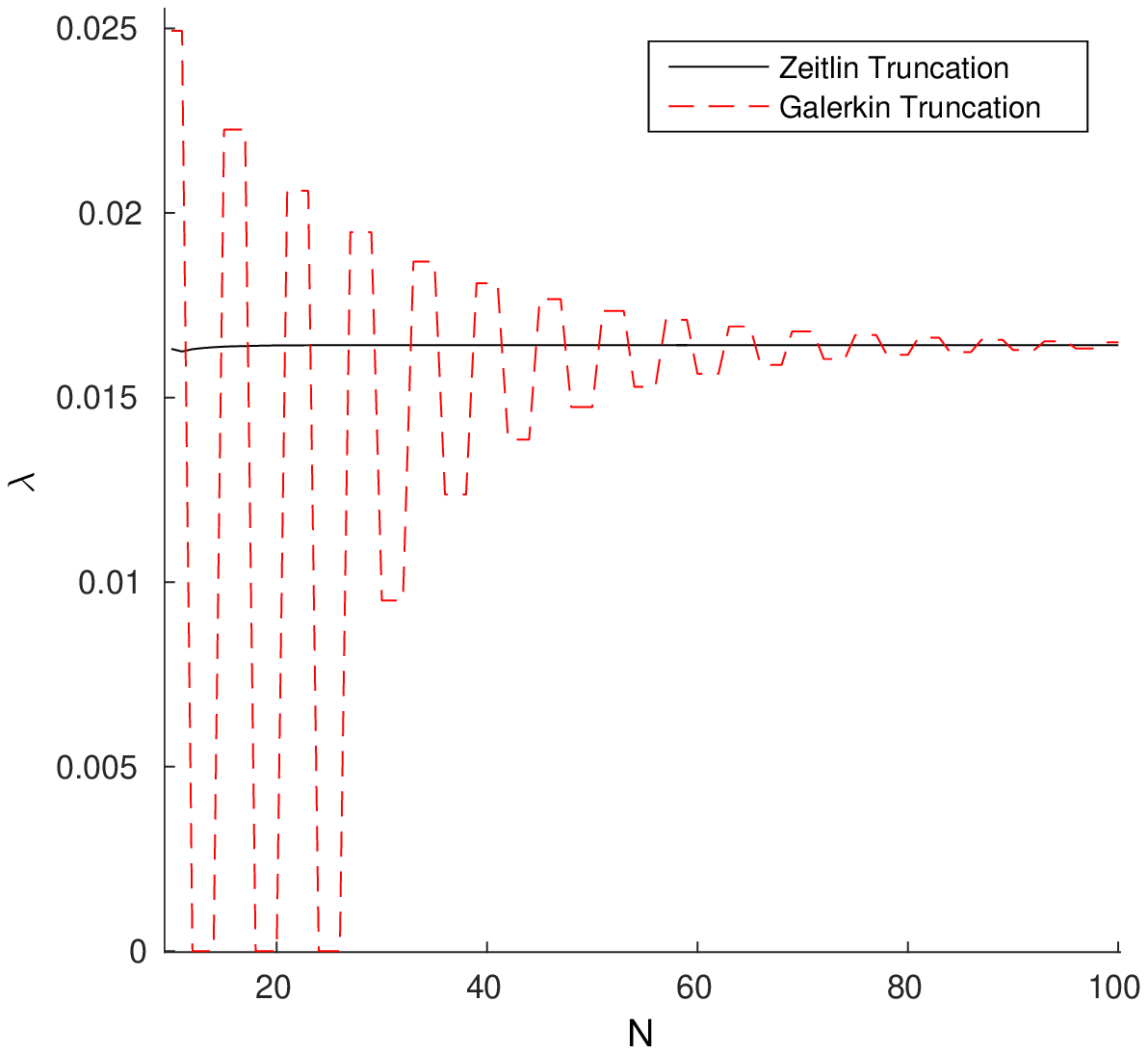}
\centering\includegraphics[width=0.49\textwidth]{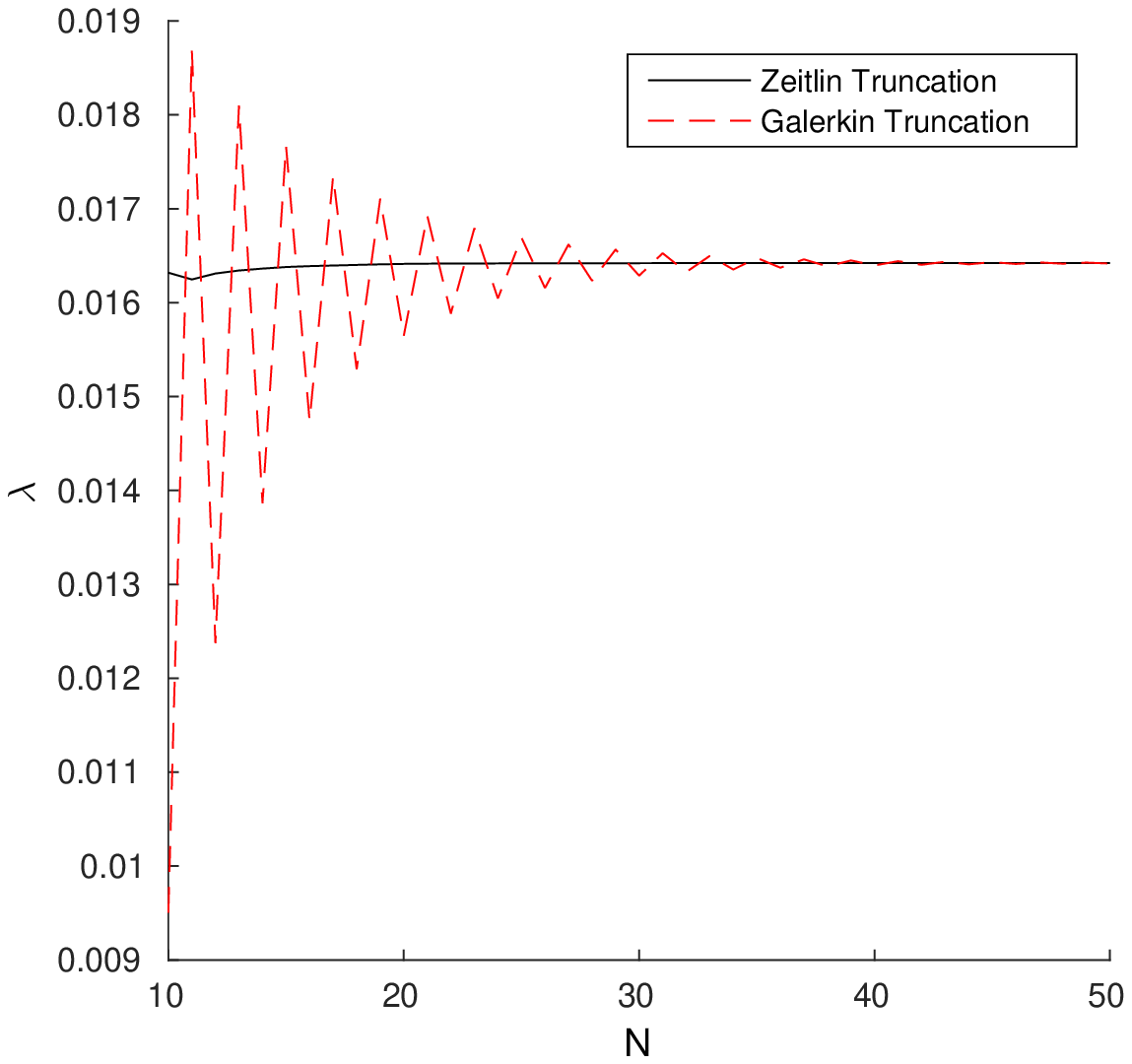}
\caption{Numerically computed real eigenvalues vs Fourier mode domain size $N$.For these figure $\mathbf{a}=(0, 3)^T$ and $\mathbf{p}=(3, 1)^T$. The red dashed lines shows the eigenvalues computed by the Galerkin truncation in equation \eqref{eq:MainMatrixG}, and the black solid lines show the eigenvalues computed by the Zeitlin truncation in equation \eqref{eq:MainMatrixZ}. \hl{For the left figure, the same truncation domain} \eqref{eq:trundomain} \hl{is used for the Zeitlin truncation and the Galerkin truncation, meaning a larger matrix is computer for the Zeitlin truncation. For the right figure, a Galerkin truncation with $2N+1$ modes was chosen so that the same number of Fourier modes are included in both calculations. The convergence of the eigenvalue as a function of $N$ computed with the Zeitlin truncation is significantly better in either case.
These plots} omit the factor of $\alpha/\alpha'$ for clarity.}
\label{fig:Convergence}
\end{figure}

\begin{figure}
\centering
\includegraphics[width=0.48\textwidth,height=6cm]{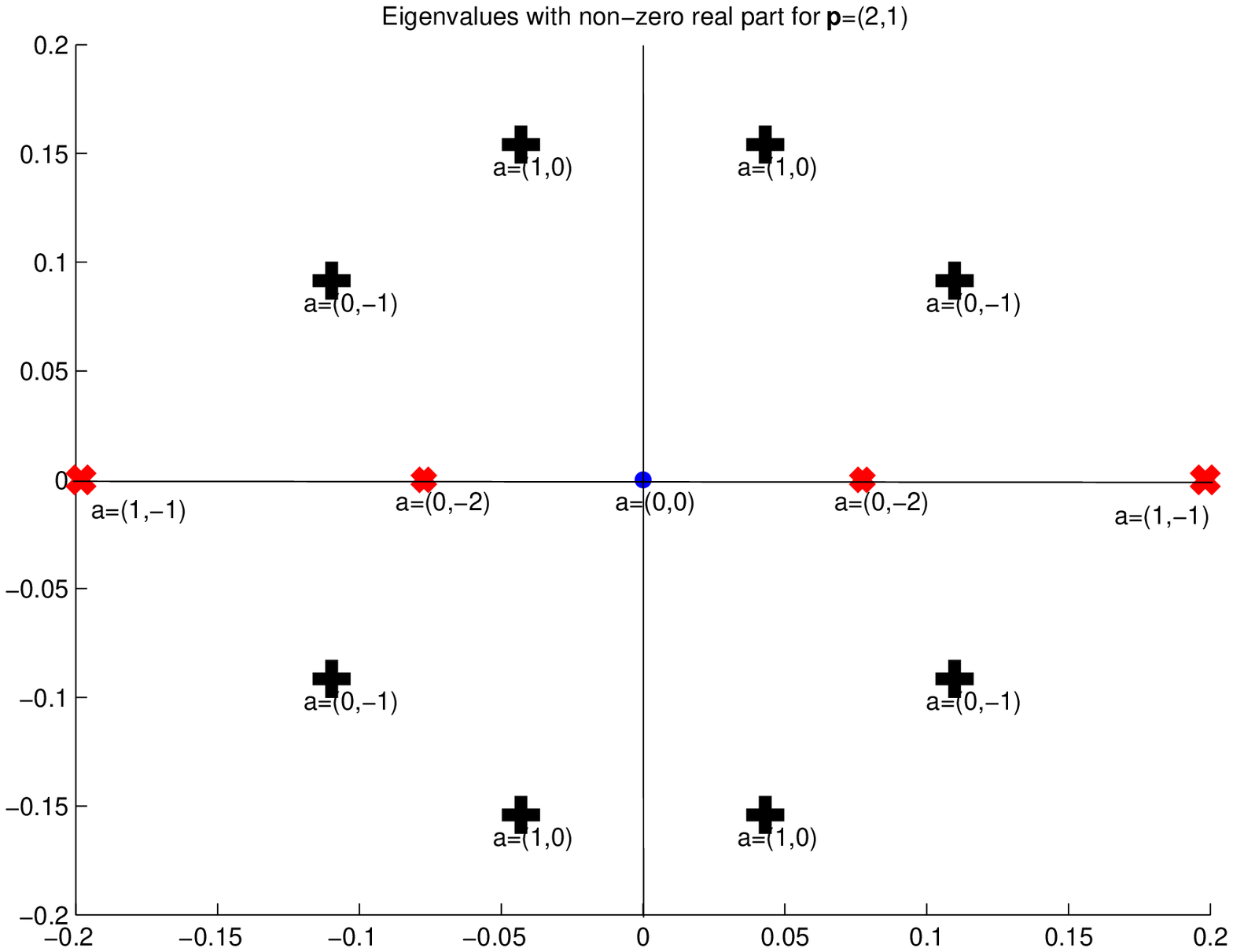}
\includegraphics[width=0.48\textwidth]{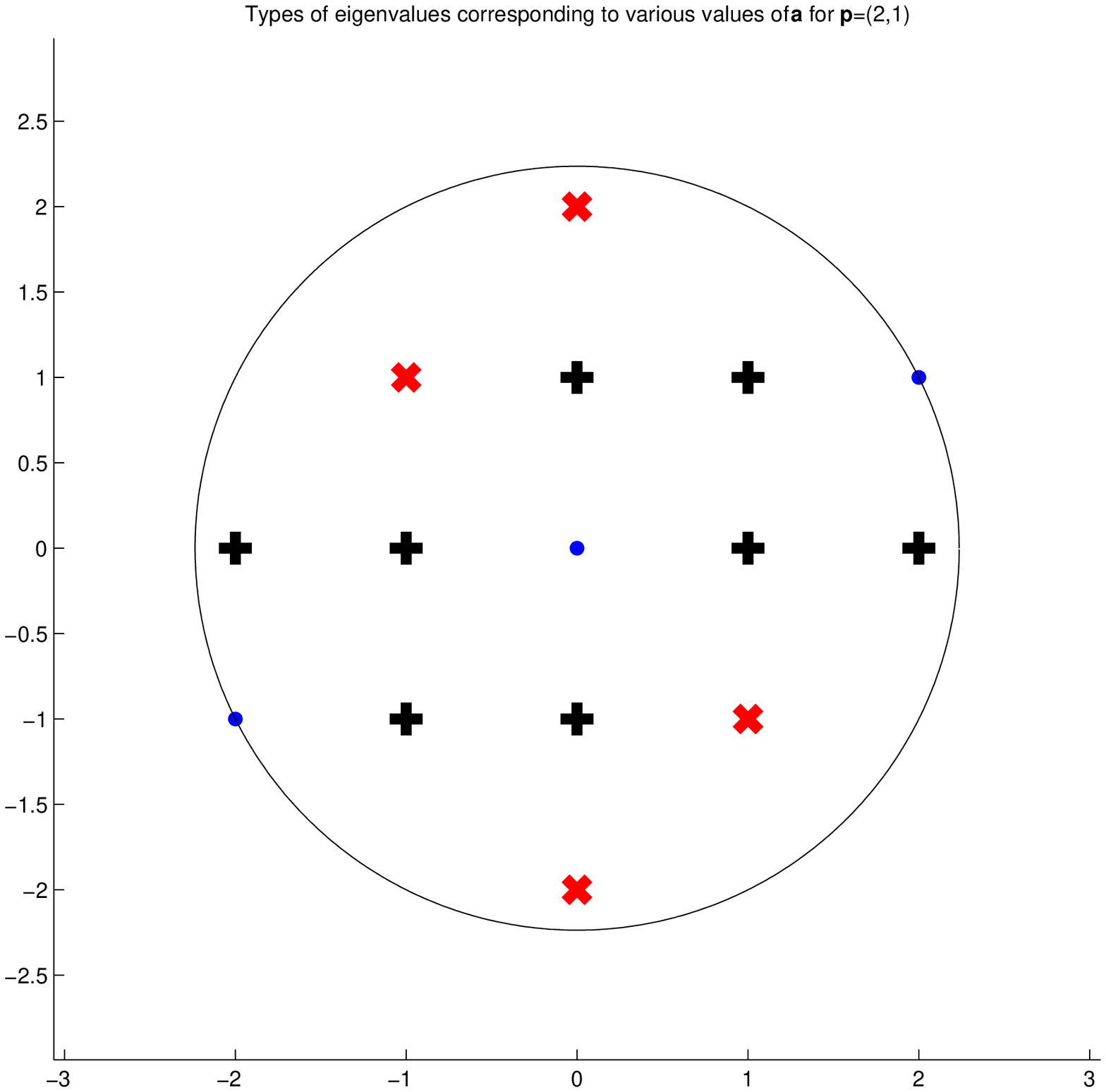}
\caption{Non-imaginary eigenvalues (left) and the corresponding lattice points $\mathbf{a}$ in the unstable disc (right) for the equilibrium with $\mathbf{p}=(2,1)^T$. At the top, the eigenvalues with non-zero real part for every unstable class $\Sigma_\mathbf{a}$ are shown. Eigenvalues with zero imaginary part are marked with a red $\times$, complex eigenvalues are marked with a black $+$, and the zero eigenvalues are marked with a blue dot. In the bottom figure we see the values of $\mathbf{a}$ that correspond to these classes. The zero class led by $\mathbf{a}=(0,0)^T$ gives only zero eigenvalues. Compare the locations of the classes corresponding to real eigenvalues to the shaded region in figure \ref{fig:classes}. For these figures, $\Gamma=\frac{1}{2}$, so $\Omega^*=\cos (\mathbf{x}\cdot\mathbf{p})$, and a Zeitlin truncation is used for the approximation.} 
\label{fig:EigTypes}
\end{figure}



\begin{figure}
\centering
\includegraphics[width=0.7\textwidth]{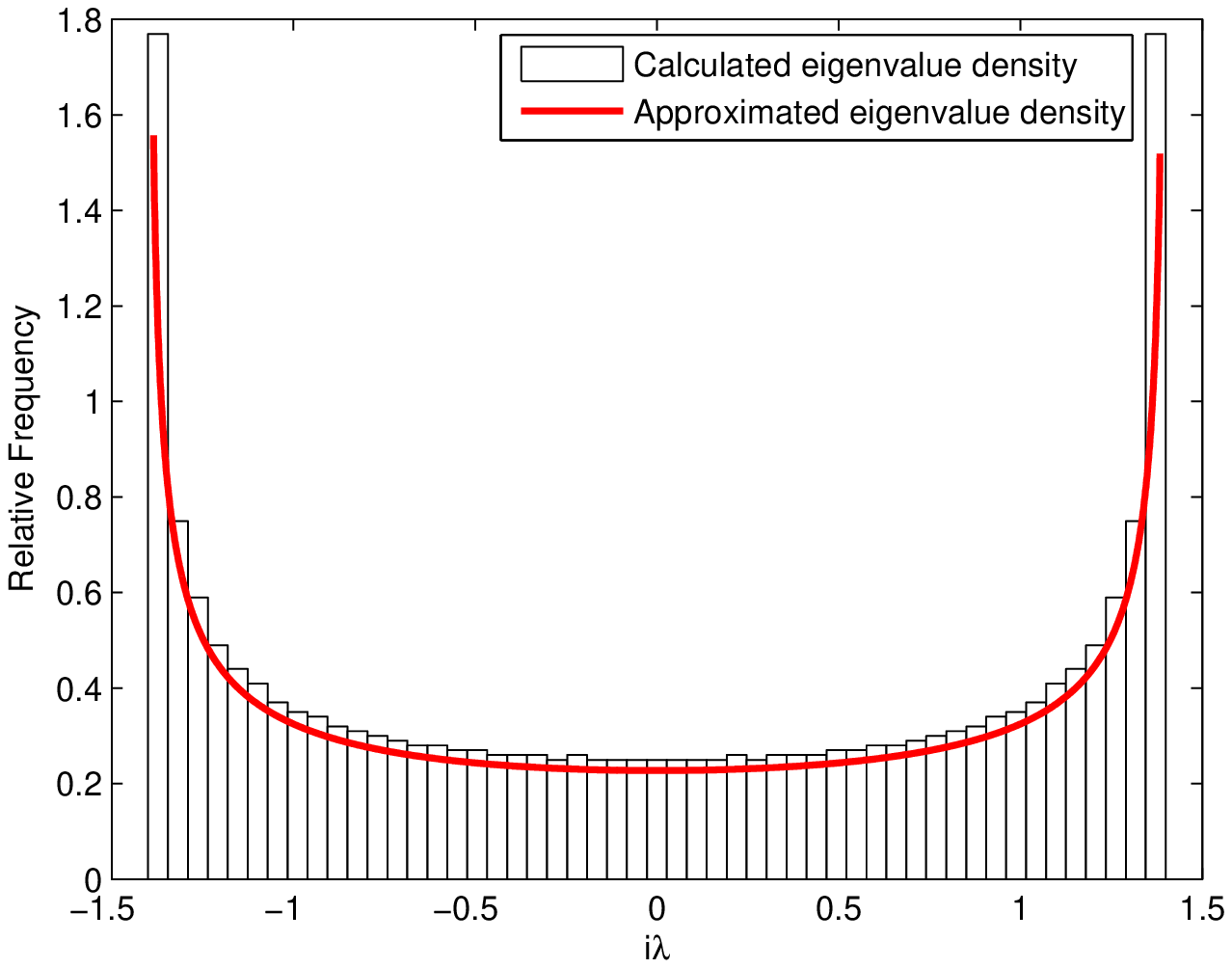}
\caption{The density \hl{of the} imaginary part of the spectrum, for the class $\mathbf{p}=(3,1)$, $\mathbf{a}=(1,-2)$, $N=1000$, and $\Gamma=0.5$. The bars show the normalized density of the imaginary eigenvalues computed for $N=1000$. The thick red line shows the approximate density computed by taking the approximation $\rho_k \to \frac{1}{|\mathbf{p}|^2}$. For this figure a Zeitlin truncation is used for the approximation.}
\label{fig:ImagDensity}
\centering
\end{figure}

\subsection{The Unstable Spectrum}
The Zeitlin class decomposition means that we now typically compute the eigenvalues of $(2N+1)$ matrices, each of size $(2N+1)\times (2N+1)$. Without the class decomposition, the eigenvalues of one $(2N+1)^2\times (2N+1)^2$ matrix need to be computed. So the class decomposition results in an extremely significant saving of computation time. For a Galerkin truncation, this computational saving is even more pronounced. However, this is at the expense of accuracy (see figure \ref{fig:Convergence}).

Figure \ref{fig:53Ensemble} shows all the eigenvalues associated with a fixed value of $\mathbf{p}$ and $N$. 
There are exactly twice the number of interior lattice points in $D_\mathbf{p} \setminus {\mathbf{0}}$. This agrees with the result in \cite{Li04} that the discrete spectrum of the corresponding operator has at most $2|D_\mathbf{p}|-2$ non-imaginary eigenvalues. Our numerical results indicate that this bound is likely to be sharp; for all choices of $\mathbf{p}$ tested there is equality.

Figure \ref{fig:Convergence} shows the values at which the calculated eigenvalues converge as a function of the size $N$ of our truncation domain $\mathcal{D}$. Compared to the Galerkin truncation the eigenvalue converges for much smaller values of $N$ when using Zeitlin's method.

Figure \ref{fig:EigTypes} shows the correspondence between the location of values of $\mathbf{a}$ and the types of eigenvalues of the class $\Sigma'_\mathbf{a}$. This corresponds to the results of Section \ref{sec:Instability}.  Compare the positioning of the Fourier modes with figure \ref{fig:classes}. 

%
%
%

\subsection{The Stable Spectrum}

Figure \ref{fig:ImagDensity} shows the density of the imaginary parts of the spectrum for $\mathbf{a}=(-4,7)^T$, $\mathbf{p}=(7,5)^T$. There are also non-imaginary eigenvalues but these are not shown on the figure. 

For any $\varepsilon>0$, we can choose sufficiently large $N$ so that there exists some $\ab$ such that $\frac{1}{|{\mathbf{b}}|^2}<\varepsilon$ for all $\mathbf{b}\in\Sigma'_\mathbf{a}$. So the imaginary spectrum of this class can be approximated by taking $\rho_k\approx \frac{1}{|\mathbf{p}|^2}$. The resulting matrix $A$ from \eqref{eq:MainMatrixZ} is now circulant. A circulant matrix is diagonalised by a discrete Fourier transform (see \cite{Karner02}). Thus the eigenvalues of $A$ are then found to be
\begin{equation}
	\lambda_j=\frac{2i}{|\mathbf{p}|^2} \sin \left (\frac{2\pi j}{n} \right ) \text{ for }j=0,...,n-1 \text{ where } j \text{ is the size of }A.
	\label{eq:circulantevs}
\end{equation}
See \cite{Karner02} for details of this calculation. 

Thus the approximate imaginary spectrum of $\Sigma_\mathbf{a}$ for sufficiently large $|\mathbf{a}|$ lies in the interval $ \frac{2i}{|\mathbf{p}|^2}[-|\alpha|,|\alpha|]$ on the imaginary axis. Taking the limit $N\to\infty$ (and so $n\to\infty$), for each $x\in[0,1]$ there is a correspondence with an eigenvalue $\lambda_x$ where $x=\frac{1}{2\pi}\sin^{-1}\left (\frac{\lambda_x|\mathbf{p}|^2}{2i}\right )$. Differentiating this gives the density function
\begin{equation}
	F(x)=\frac{|\mathbf{p}|^2}{\pi\sqrt{ 4\alpha ^2 - |\mathbf{p}|^4x^2 }}.
	\label{eq:appdensity}
\end{equation}

That is, the proportion of the eigenvalues lying between $c_1i$ and $c_2i$ on the imaginary axis is $\int_{c_1}^{c_2} F(x) \mathrm{d}x$ for $c_1,c_2\in\frac{2}{|\mathbf{p}|^2}[-|\alpha|,|\alpha|]$. This curve is also plotted in \ref{fig:ImagDensity}, and it agrees well with the numerically calculated eigenvalues. This is surprising as the value of $|\mathbf{a}|$ is not particularly high. We can conclude that equation \eqref{eq:appdensity} gives a reasonable approximation of the imaginary spectrum for many choices of $\mathbf{a}$.

 \cite{Li04,shvidkoy03} describe the essential spectrum of the linearised operator that coincides with our limit $N\to\infty$. The essential spectrum for the class led by $\ab$ is given in that paper as
\begin{equation}
\label{eq:essspec}
	\sigma_{\text{ess}}=i[-|\beta|,|\beta|],\text{ where }\beta=\frac{2}{|\mathbf{p}|^2}(\mathbf{a}\times\mathbf{p})\Gamma.
\end{equation}

In the limit $N\to \infty$, $\frac{\sin (\varepsilon\mathbf{a}\times \mathbf{p})}{\varepsilon}\to\mathbf{a}\times\mathbf{p}$, so $\frac{2}{|\mathbf{p}|^2}|\alpha|\to|\beta|$. Thus our approximation for large $N$ reproduces the essential spectrum of a single class calculated in \cite{Li04}. Note that this is the essential spectrum associated with a single subsystem of the linearised problem. The essential spectrum of the full system is the superposition of all these essential spectra. It was shown in \cite{shvidkoy03} that this is $i\mathds{R}$, which follows from considering the superposition of \eqref{eq:essspec} for all possible values of $\ab$.

\section{Conclusion}
\label{sec:Conclusion}

We have demonstrated the non-linear instability of the stationary solutions with vorticity $\Omega^*=2\Gamma\cos (\mathbf{p} \cdot  \mathbf{x} )$ in the Euler equations for almost 
all values of $\mathbf{p}$ such that $\mathbf{p}\neq (1,0)^T$ (or rotations and reflections thereof). 
We started out by using the Zeitlin truncation, and observe the numerical approximation of eigenvalues for finite $N$ obtained from Zeitlin's truncation 
converge much faster with $N$. However, the Zeitlin truncation does not behave well when $\text{gcd}(p_1,p_2)$ is even. Because of this, we developed much of our theory for both truncations.

In addition, we have recreated and extended a number of results described by \cite{Li00}, \cite{Li04}, and \cite{shvidkoy03}. Specifically, we have shown that the ``unstable disc theorem'' presented in \cite{Li00} still holds true in the current context of finite dimensional approximation. 
Moreover, we have shown that for almost all $\mathbf{p}$ we can use the unstable disc to prove instability (as opposed to the existing stability results developed by Li and others).
We have also numerically verified the bound on the number of non-imaginary eigenvalues and the essential spectrum of an individual class in \cite{Li04} and \cite{shvidkoy03}. We used very different approaches and arguments to those papers.

There are obvious extensions of this work. The first is a complete description of the non-imaginary spectrum. This would require first showing that for two consecutive negative values of $\rho$, the corresponding subsystem has four non-imaginary eigenvalues, either two real pairs or a complex quadruplet. 
This would be a step towards proving that the bound from \cite{Li04} is sharp. Another extension would be to see if any of the methods used in this paper could be applied to more complex steady states, for instance $\Omega^*=\sin (p_1x)\sin(p_2y)$.


A similar analysis of the Euler fluid equations on a three dimensional torus would be interesting. \hl{A Zeitlin-style structure preserving truncation for the 3D case is not possible, as the Casimirs that make such a truncation useful are not present in the 3D problem}\cite{Zeitlin90}\hl{. Similar stability results may still be possible using a Galerkin style truncation instead.

There is also a discussion of a structure-preserving truncation that includes a viscosity term in }\cite{Zeitlin05}\hl{. A comparison of Zeitlin's truncation to standard truncations with a viscosity term included as in Figure }\ref{fig:Convergence} \hl{would be valuable.}

\hl{A significant extension of this material would be an application of the same methods to the Euler problem on a sphere. There similar structure preserving truncation also due to Zeitlin for the sphere }\cite{zeitlin2004self}\hl{ and so there is some hope of similar results in that setting. }

\section*{Acknowledgements} 
RM gratefully acknowledges M. Beck and Y. Latushkin for extremely helpful discussions on the known stability results on the 2D Euler equations.

\bibliographystyle{plain}
\bibliography{draft_paper}   


\end{document}